\DeclareRobustCommand{\lyxsout}[1]{\ifx\\#1\else\sout{#1}\fi}
\numberwithin{equation}{section}
\numberwithin{figure}{section}
\theoremstyle{plain}
\newtheorem{thm}{\protect\theoremname}[section]
\theoremstyle{plain}
\newtheorem{cor}[thm]{\protect\corollaryname}
\theoremstyle{remark}
\newtheorem{rem}[thm]{\protect\remarkname}
 \theoremstyle{plain}
  \newtheorem{myprop}[thm]{Proposition}
\definecolor{brown(traditional)}{rgb}{0.59, 0.29, 0.0}
\definecolor{blue(ryb)}{rgb}{0.01, 0.28, 1.0}
\definecolor{red}{rgb}{1.0, 0.0, 0.0}
\definecolor{magenta}{rgb}{1.0, 0.0, 1.0}
\definecolor{mahogany}{rgb}{0.75, 0.25, 0.0}
\definecolor{lavenderpurple}{rgb}{0.59, 0.48, 0.71}
\definecolor{olive}{rgb}{0.5, 0.5, 0.0}
\definecolor{brickred}{rgb}{0.8, 0.25, 0.33}
\definecolor{antiquefuchsia}{rgb}{0.57, 0.36, 0.51}
\definecolor{bole}{rgb}{0.47, 0.27, 0.23}
\definecolor{darkolivegreen}{rgb}{0.33, 0.42, 0.18}
\definecolor{deepjunglegreen}{rgb}{0.0, 0.29, 0.29}
\definecolor{brickred}{rgb}{0.8, 0.25, 0.33}
\definecolor{deepjunglegreen}{rgb}{0.0, 0.29, 0.29}
\definecolor{darkpastelgreen}{rgb}{0.01, 0.75, 0.24}
\definecolor{green(pigment)}{rgb}{0.0, 0.65, 0.31}
\definecolor{junglegreen}{rgb}{0.16, 0.67, 0.53}
\definecolor{officegreen}{rgb}{0.0, 0.5, 0.0}
\definecolor{seagreen}{rgb}{0.18, 0.55, 0.34}
\definecolor{teal}{rgb}{0.0, 0.5, 0.5}
\definecolor{brightgreen}{rgb}{0.4, 1.0, 0.0}
\definecolor{electricgreen}{rgb}{0.0, 1.0, 0.0}
\definecolor{malachite}{rgb}{0.04, 0.85, 0.32}
\providecommand{\corollaryname}{Corollary}
\providecommand{\remarkname}{Remark}
\providecommand{\theoremname}{Theorem}
\begin{document}

\global\long\def\ga{\alpha}%
\global\long\def\gb{\beta}%
\global\long\def\ggm{\gamma}%
\global\long\def\go{\omega}%
\global\long\def\gs{\sigma}%
\global\long\def\gd{\delta}%
\global\long\def\gD{\Delta}%
\global\long\def\vph{\phi}%
\global\long\def\gf{\varphi}%
\global\long\def\gk{\kappa}%
\global\long\def\gl{\lambda}%
\global\long\def\gz{\zeta}%
\global\long\def\gh{\eta}%
\global\long\def\gy{\upsilon}%
\global\long\def\gth{\theta}%
\global\long\def\gO{\Omega}%
\global\long\def\gG{\Gamma}%

\global\long\def\eps{\varepsilon}%
\global\long\def\epss#1#2{\varepsilon_{#2}^{#1}}%
\global\long\def\ep#1{\eps_{#1}}%

\global\long\def\wh#1{\widehat{#1}}%
\global\long\def\hi{\hat{\imath}}%
\global\long\def\hj{\hat{\jmath}}%
\global\long\def\hk{\hat{k}}%
\global\long\def\ol#1{\overline{#1}}%
\global\long\def\ul#1{\underline{#1}}%

\global\long\def\spec#1{\textsf{#1}}%

\global\long\def\ui{\wh{\boldsymbol{\imath}}}%
\global\long\def\uj{\wh{\boldsymbol{\jmath}}}%
\global\long\def\uk{\widehat{\boldsymbol{k}}}%

\global\long\def\uI{\widehat{\mathbf{I}}}%
\global\long\def\uJ{\widehat{\mathbf{J}}}%
\global\long\def\uK{\widehat{\mathbf{K}}}%

\global\long\def\mc#1{\mathcal{#1}}%
\global\long\def\bs#1{\boldsymbol{#1}}%
\global\long\def\vect#1{\mathbf{#1}}%
\global\long\def\bi#1{\textbf{\emph{#1}}}%

\global\long\def\uv#1{\widehat{\boldsymbol{#1}}}%
\global\long\def\cross{\times}%

\global\long\def\di{d}%
\global\long\def\dee#1{\mathop{d#1}}%

\global\long\def\ddt{\frac{\dee{}}{\dee t}}%
\global\long\def\dbyd#1{\frac{\dee{}}{\dee{#1}}}%
\global\long\def\dby#1#2{\frac{\partial#1}{\partial#2}}%
\global\long\def\dxdt#1{\frac{\dee{#1}}{\dee t}}%

\global\long\def\vct#1{\bs{#1}}%

\global\long\def\partialby#1#2{\frac{\partial#1}{\partial x^{#2}}}%
\newcommandx\parder[2][usedefault, addprefix=\global, 1=]{\frac{\partial#2}{\partial#1}}%
\global\long\def\supdot{^{\!\bs{\mathord{\cdot}}}}%

\global\long\def\fall{,\quad\text{for all}\quad}%

\global\long\def\reals{\mathbb{R}}%

\global\long\def\rthree{\reals^{3}}%
\global\long\def\rsix{\reals^{6}}%
\global\long\def\rn{\reals^{n}}%
\global\long\def\eucl{\mathbb{E}}%
\global\long\def\euthree{\eucl^{3}}%
\global\long\def\euln{\eucl^{n}}%

\global\long\def\prn{\reals^{n+}}%
\global\long\def\nrn{\reals^{n-}}%
\global\long\def\cprn{\overline{\reals}^{n+}}%
\global\long\def\cnrn{\overline{\reals}^{n-}}%
\global\long\def\rt#1{\reals^{#1}}%
\global\long\def\rtw{\reals^{12}}%

\global\long\def\les{\leqslant}%
\global\long\def\ges{\geqslant}%

\global\long\def\dX{\dee{\bp}}%
\global\long\def\dx{\dee x}%
\global\long\def\D{D}%

\global\long\def\from{\colon}%
\global\long\def\tto{\longrightarrow}%
\global\long\def\lmt{\longmapsto}%
\global\long\def\lhr{\lhook\joinrel\longrightarrow}%
\global\long\def\mto{\mapsto}%

\global\long\def\abs#1{\left|#1\right|}%

\global\long\def\isom{\cong}%

\global\long\def\comp{\circ}%

\global\long\def\cl#1{\overline{#1}}%

\global\long\def\fun{\varphi}%

\global\long\def\interior{\textrm{Int}\,}%
\global\long\def\inter#1{\kern0pt  #1^{\mathrm{o}}}%
\global\long\def\interior{\textrm{Int}\,}%
\global\long\def\inter#1{\kern0pt  #1^{\mathrm{o}}}%
\global\long\def\into{\mathrm{o}}%

\global\long\def\sign{\textrm{sign}\,}%
\global\long\def\sgn#1{(-1)^{#1}}%
\global\long\def\sgnp#1{(-1)^{\abs{#1}}}%

\global\long\def\du#1{#1^{*}}%

\global\long\def\tsum{{\textstyle \sum}}%
\global\long\def\lsum{{\textstyle \sum}}%

\global\long\def\dimension{\textrm{dim}\,}%

\global\long\def\esssup{\textrm{ess}\,\sup}%

\global\long\def\ess{\textrm{{ess}}}%

\global\long\def\kernel{\mathop{\textrm{\textup{Kernel}}}}%

\global\long\def\support{\mathop{\textrm{\textup{supp}}}}%

\global\long\def\image{\mathop{\textrm{\textup{Image}}}}%

\global\long\def\diver{\mathop{\textrm{\textup{div}}}}%

\global\long\def\spanv{\textrm{span}}%

\global\long\def\tr{\mathop{\textrm{\textup{tr}}}}%
\global\long\def\tran{\mathrm{tr}}%

\global\long\def\opt{\mathrm{opt}}%

\global\long\def\resto#1{|_{#1}}%
\global\long\def\incl{\mathcal{I}}%
\global\long\def\iden{\imath}%
\global\long\def\idnt{\textrm{Id}}%
\global\long\def\rest{\rho}%
\global\long\def\extnd{e_{0}}%

\global\long\def\proj{\textrm{pr}}%

\global\long\def\L#1{L\bigl(#1\bigr)}%
\global\long\def\LS#1{L_{S}\bigl(#1\bigr)}%

\global\long\def\ino#1{\int_{#1}}%

\global\long\def\half{\frac{1}{2}}%
\global\long\def\shalf{{\scriptstyle \half}}%
\global\long\def\third{\frac{1}{3}}%

\global\long\def\empt{\varnothing}%

\global\long\def\innp#1#2{\left\langle #1,#2\right\rangle }%

\global\long\def\paren#1{\left(#1\right)}%
\global\long\def\bigp#1{\bigl(#1\bigr)}%
\global\long\def\biggp#1{\biggl(#1\biggr)}%
\global\long\def\Bigp#1{\Bigl(#1\Bigr)}%

\global\long\def\braces#1{\left\{  #1\right\}  }%
\global\long\def\sqbr#1{\left[#1\right]}%
\global\long\def\anglep#1{\left\langle #1\right\rangle }%

\global\long\def\bigabs#1{\bigl|#1\bigr|}%
\global\long\def\dotp#1{#1^{\centerdot}}%
\global\long\def\pdot#1{#1^{\bs{\!\cdot}}}%

\global\long\def\eq{\sim}%
\global\long\def\quot{/\!\!\eq}%
\global\long\def\by{\!/\!}%

\global\long\def\stp{\text{\small\ensuremath{\bigodot}}}%
\global\long\def\tp{\text{\small\ensuremath{\bigotimes}}}%

\global\long\def\mi#1{#1}%
\global\long\def\mii{I}%
\global\long\def\mie#1#2{#1_{1}\cdots#1_{#2}}%

\global\long\def\smi#1{\boldsymbol{#1}}%
\global\long\def\asmi#1{#1}%
\global\long\def\ordr#1{\left\langle #1\right\rangle }%

\global\long\def\symm#1{\paren{#1}}%
\global\long\def\smtr{\mathcal{S}}%

\global\long\def\perm{p}%
\global\long\def\sperm{\mathcal{P}}%

\global\long\def\oneto{1,\dots,}%

\global\long\def\lisub#1#2#3{#1_{1}#2\dots#2#1_{#3}}%

\global\long\def\lisup#1#2#3{#1^{1}#2\dots#2#1^{#3}}%

\global\long\def\lisubb#1#2#3#4{#1_{#2}#3\dots#3#1_{#4}}%

\global\long\def\lisubbc#1#2#3#4{#1_{#2}#3\cdots#3#1_{#4}}%

\global\long\def\lisubbwout#1#2#3#4#5{#1_{#2}#3\dots#3\widehat{#1}_{#5}#3\dots#3#1_{#4}}%

\global\long\def\lisubc#1#2#3{#1_{1}#2\cdots#2#1_{#3}}%

\global\long\def\lisupc#1#2#3{#1^{1}#2\cdots#2#1^{#3}}%

\global\long\def\lisupp#1#2#3#4{#1^{#2}#3\dots#3#1^{#4}}%

\global\long\def\lisuppc#1#2#3#4{#1^{#2}#3\cdots#3#1^{#4}}%

\global\long\def\lisuppwout#1#2#3#4#5#6{#1^{#2}#3#4#3\wh{#1^{#6}}#3#4#3#1^{#5}}%

\global\long\def\lisubbwout#1#2#3#4#5#6{#1_{#2}#3#4#3\wh{#1}_{#6}#3#4#3#1_{#5}}%

\global\long\def\lisubwout#1#2#3#4{#1_{1}#2\dots#2\widehat{#1}_{#4}#2\dots#2#1_{#3}}%

\global\long\def\lisupwout#1#2#3#4{#1^{1}#2\dots#2\widehat{#1^{#4}}#2\dots#2#1^{#3}}%

\global\long\def\lisubwoutc#1#2#3#4{#1_{1}#2\cdots#2\widehat{#1}_{#4}#2\cdots#2#1_{#3}}%

\global\long\def\twp#1#2#3{\dee{#1}^{#2}\wedge\dee{#1}^{#3}}%

\global\long\def\thp#1#2#3#4{\dee{#1}^{#2}\wedge\dee{#1}^{#3}\wedge\dee{#1}^{#4}}%

\global\long\def\fop#1#2#3#4#5{\dee{#1}^{#2}\wedge\dee{#1}^{#3}\wedge\dee{#1}^{#4}\wedge\dee{#1}^{#5}}%

\global\long\def\idots#1{#1\dots#1}%
\global\long\def\icdots#1{#1\cdots#1}%

\global\long\def\norm#1{\|#1\|}%

\global\long\def\nonh{\heartsuit}%

\global\long\def\nhn#1{\norm{#1}^{\nonh}}%

\global\long\def\bigmid{\,\bigl|\,}%

\global\long\def\trps{^{{\scriptscriptstyle \textsf{T}}}}%

\global\long\def\testfuns{\mathcal{D}}%

\global\long\def\ntil#1{\tilde{#1}{}}%

\global\long\def\pis{y}%
\global\long\def\xo{\pis_{0}}%
\global\long\def\x{x}%

\global\long\def\pib{x}%
\global\long\def\bp{X}%
\global\long\def\ii{i}%
\global\long\def\ia{\alpha}%
\global\long\def\fp{y}%
\global\long\def\piv{v}%

\global\long\def\ib{i}%
\global\long\def\is{\alpha}%

\global\long\def\pbndo{\Gamma}%
\global\long\def\bndoo{\pbndo_{0}}%
 
\global\long\def\bndot{\pbndo_{t}}%
\global\long\def\intb{\inter{\body}}%
\global\long\def\bndb{\bdry\body}%

\global\long\def\cloo{\cl{\gO}}%

\global\long\def\nor{\nu}%
\global\long\def\Nor{\mathbf{N}}%

\global\long\def\dA{\dee A}%

\global\long\def\dV{\dee V}%

\global\long\def\eps{\varepsilon}%

\global\long\def\tv{v}%
\global\long\def\av{u}%

\global\long\def\svs{\mathcal{W}}%
\global\long\def\vs{\mathbf{V}}%
\global\long\def\avs{\mathbf{U}}%
\global\long\def\affsp{\mathcal{A}}%
\global\long\def\man{\mathcal{M}}%
\global\long\def\odman{\mathcal{N}}%
\global\long\def\subman{\mathcal{V}}%
\global\long\def\pt{p}%

\global\long\def\vbase{e}%
\global\long\def\sbase{\mathbf{e}}%
\global\long\def\msbase{\mathfrak{e}}%
\global\long\def\vect{v}%
\global\long\def\dbase{\sbase}%

\global\long\def\chart{\varphi}%
\global\long\def\Chart{\Phi}%

\global\long\def\mind{\alpha}%
\global\long\def\vb{W}%
\global\long\def\vbp{\pi}%

\global\long\def\vbt{\mathcal{E}}%
\global\long\def\fib{\vs}%
\global\long\def\vbts{W}%
\global\long\def\avb{U}%
\global\long\def\vbp{\xi}%

\global\long\def\chart{\vph}%
\global\long\def\vbchart{\Phi}%

\global\long\def\jetb#1{J^{#1}}%
\global\long\def\jet#1{j^{1}(#1)}%
\global\long\def\tjet{\tilde{\jmath}}%

\global\long\def\Jet#1{J^{1}(#1)}%

\global\long\def\jetm{j}%

\global\long\def\coj{\mathfrak{d}}%

\global\long\def\alt{\mathfrak{A}}%

\global\long\def\pou{\eta}%

\global\long\def\ext{{\textstyle \bigwedge}}%
\global\long\def\forms{\Omega}%

\global\long\def\dotwedge{\dot{\mbox{\ensuremath{\wedge}}}}%

\global\long\def\vel{\theta}%

\global\long\def\Jac{\mathcal{J}}%

\global\long\def\contr{\mathbin{\raisebox{0.4pt}{\mbox{\ensuremath{\lrcorner}}}}}%
\global\long\def\fcor{\llcorner}%
\global\long\def\bcor{\lrcorner}%
\global\long\def\fcontr{\mathbin{\raisebox{0.4pt}{\mbox{\ensuremath{\llcorner}}}}}%

\global\long\def\lie{\mathcal{L}}%

\global\long\def\ssym#1#2{\ext^{#1}T^{*}#2}%

\global\long\def\sh{^{\sharp}}%

\global\long\def\nfo{\ext^{n}T^{*}\base}%
\global\long\def\dfs{\ext^{d}T^{*}\base}%
\global\long\def\dmfs{\ext^{d-1}T^{*}\base}%

\global\long\def\spc{\mathcal{S}}%
\global\long\def\sptm{\mathcal{E}}%
\global\long\def\evnt{e}%
\global\long\def\frame{\Psi}%

\global\long\def\timeman{\mathcal{T}}%
\global\long\def\zman{t}%
\global\long\def\dims{n}%
\global\long\def\m{\dims-1}%
\global\long\def\dimw{m}%

\global\long\def\wc{z}%

\global\long\def\fourv#1{\mbox{\ensuremath{\mathfrak{#1}}}}%

\global\long\def\pbform#1{\undertilde{#1}}%
\global\long\def\util#1{\raisebox{-5pt}{\ensuremath{{\scriptscriptstyle \sim}}}\!\!\!#1}%

\global\long\def\utilJ{\util J}%

\global\long\def\utilRho{\util{\rho}}%

\global\long\def\body{\mathcal{B}}%
\global\long\def\man{\mathcal{M}}%
\global\long\def\var{\mathcal{V}}%
\global\long\def\base{\mathcal{X}}%
\global\long\def\fb{\mathcal{Y}}%
\global\long\def\srfc{\mathcal{Z}}%
\global\long\def\dimb{n}%
\global\long\def\dimf{m}%
\global\long\def\afb{\mathcal{Z}}%

\global\long\def\bdry{\partial}%

\global\long\def\gO{\varOmega}%

\global\long\def\reg{\gO}%
\global\long\def\bdrr{\bdry\reg}%

\global\long\def\bdom{\bdry\gO}%

\global\long\def\bndo{\partial\gO}%

\global\long\def\tpr{\vartheta}%

\global\long\def\mot{M}%
\global\long\def\vf{w}%
\global\long\def\const{h}%

\global\long\def\avf{u}%

\global\long\def\stn{\varepsilon}%
\global\long\def\djet{\chi}%

\global\long\def\jvf{\eps}%

\global\long\def\rig{r}%

\global\long\def\rigs{\mathcal{R}}%

\global\long\def\qrigs{\!/\!\rigs}%

\global\long\def\qd{\!/\,\!\kernel\diffop}%

\global\long\def\dis{\chi}%
\global\long\def\conf{\kappa}%
\global\long\def\invc{\hat{\conf}^{-1}}%
\global\long\def\dinvc{\hat{\conf}^{-1*}}%
\global\long\def\csp{\mathcal{Q}}%

\global\long\def\embds{\textrm{Emb}}%

\global\long\def\lc{A}%

\global\long\def\lv{\dot{A}}%
\global\long\def\alv{\dot{B}}%

\global\long\def\j{\mathop{\mathrm{j}}}%
\global\long\def\mapp{M}%
\global\long\def\J{J}%
\global\long\def\jex{\mathop{}\!\mathrm{j}}%

\global\long\def\fc{F}%
\global\long\def\load{f}%
\global\long\def\afc{g}%

\global\long\def\bfc{\mathbf{b}}%
\global\long\def\bfcc{b}%

\global\long\def\sfc{\mathbf{t}}%
\global\long\def\sfcc{t}%

\global\long\def\stm{\varsigma}%
\global\long\def\std{S}%
\global\long\def\tst{\sigma}%
\global\long\def\tstd{s}%
\global\long\def\st{\sigma}%
\global\long\def\vst{\varsigma}%
\global\long\def\vstd{S}%
\global\long\def\tstm{\sigma}%
\global\long\def\vstm{\varsigma}%

\global\long\def\stp{S_{P}}%
\global\long\def\slf{R}%

\global\long\def\crel{\Phi}%

\global\long\def\stmat{\tau}%

\global\long\def\gdiv{\bdry\textrm{iv\,}}%
\global\long\def\extjet{\mathfrak{d}}%

\global\long\def\smc#1{\mathfrak{#1}}%

\global\long\def\nhs{P}%
\global\long\def\nhsa{P}%
\global\long\def\nhsb{\underline{P}}%

\global\long\def\soc{Z}%

\global\long\def\sts{\varSigma}%
\global\long\def\spstd{\mathfrak{S}}%
\global\long\def\sptst{\mathfrak{T}}%
\global\long\def\spnhs{\mathcal{P}}%
\global\long\def\Ljj{\L{J^{1}(J^{k-1}\vb),\ext^{n}T^{*}\base}}%

\global\long\def\spsb{\text{\Large\ensuremath{\Delta}}}%

\global\long\def\ened{\mathfrak{w}}%
\global\long\def\energy{\mathfrak{W}}%

\global\long\def\ebdfc{T}%
\global\long\def\optimum{\st^{\textrm{opt}}}%
\global\long\def\scf{K}%

\global\long\def\grp{G}%
\global\long\def\gact{A}%
\global\long\def\gid{e}%
\global\long\def\gel{\ggm}%

\global\long\def\ael{\upsilon}%
\global\long\def\lal{\mathfrak{g}}%

\global\long\def\prop{P}%
\global\long\def\expr{\Pi}%

\global\long\def\aprop{Q}%

\global\long\def\flux{\omega}%
\global\long\def\aflux{\psi}%

\global\long\def\fform{\tau}%

\global\long\def\dimn{n}%

\global\long\def\sdim{{\dimn-1}}%

\global\long\def\fdens{\phi}%

\global\long\def\pform{s}%
\global\long\def\vform{\beta}%
\global\long\def\sform{\tau}%
\global\long\def\flow{\vf}%
\global\long\def\n{\m}%
\global\long\def\cmap{\mathfrak{t}}%
\global\long\def\vcmap{\varSigma}%

\global\long\def\mvec{\mathfrak{v}}%
\global\long\def\mveco#1{\mathfrak{#1}}%
\global\long\def\mv#1{\mathfrak{#1}}%
\global\long\def\smbase{\mathfrak{e}}%
\global\long\def\spx{\simp}%
\global\long\def\il{l}%
\global\long\def\awe{\frown}%

\global\long\def\hp{H}%
\global\long\def\ohp{h}%

\global\long\def\hps{G_{\dims-1}(T\spc)}%
\global\long\def\ohps{G_{\dims-1}^{\perp}(T\spc)}%

\global\long\def\hyper{\mathcal{S}}%

\global\long\def\hpsx{G_{\dims-1}(\tspc)}%
\global\long\def\ohpsx{G_{\dims-1}^{\perp}(\tspc)}%

\global\long\def\fbun{F}%

\global\long\def\flowm{\Phi}%

\global\long\def\tgb{T\spc}%
\global\long\def\ctgb{T^{*}\spc}%
\global\long\def\tspc{T_{\pis}\spc}%
\global\long\def\dspc{T_{\pis}^{*}\spc}%

\global\long\def\fflow{\fourv J}%
\global\long\def\fvform{\mathfrak{b}}%
\global\long\def\fsform{\mathfrak{t}}%
\global\long\def\fpform{\mathfrak{s}}%
\global\long\def\lfc{\mathfrak{F}}%

\global\long\def\maxw{\mathfrak{g}}%
\global\long\def\frdy{\mathfrak{f}}%
\global\long\def\ptnl{\psi}%
\global\long\def\pts{\Psi}%
\global\long\def\tptn{\Psi}%
\global\long\def\vptn{\mathfrak{a}}%
\global\long\def\mtst{\tstd_{M}}%
\global\long\def\mvst{\vstd_{M}}%

\global\long\def\sobp#1#2{W_{#2}^{#1}}%

\global\long\def\inner#1#2{\left\langle #1,#2\right\rangle }%

\global\long\def\fields{\sobp pk(\vb)}%

\global\long\def\bodyfields{\sobp p{k_{\partial}}(\vb)}%

\global\long\def\forces{\sobp pk(\vb)^{*}}%

\global\long\def\bfields{\sobp p{k_{\partial}}(\vb\resto{\bndo})}%

\global\long\def\loadp{(\sfc,\bfc)}%

\global\long\def\strains{\lp p(\jetb k(\vb))}%

\global\long\def\stresses{\lp{p'}(\jetb k(\vb)^{*})}%

\global\long\def\diffop{D}%

\global\long\def\strainm{E}%

\global\long\def\incomps{\vbts_{\yieldf}}%

\global\long\def\devs{L^{p'}(\eta_{1}^{*})}%

\global\long\def\incompsns{L^{p}(\eta_{1})}%

\global\long\def\testf{\mathcal{D}}%
\global\long\def\dists{\mathcal{D}'}%

\global\long\def\codiv{\boldsymbol{\partial}}%

\global\long\def\currof#1{\tilde{#1}}%

\global\long\def\chn{c}%
\global\long\def\chnsp{\mathbf{C}}%

\global\long\def\current{T}%
\global\long\def\curr{R}%

\global\long\def\curd{S}%
\global\long\def\curwd#1{\wh{#1}}%
\global\long\def\curnd#1{\wh{#1}}%

\global\long\def\contrf{{\scriptstyle \smallfrown}}%

\global\long\def\prodf{{\scriptstyle \smallsmile}}%

\global\long\def\form{\omega}%

\global\long\def\dens{\rho}%

\global\long\def\simp{s}%
\global\long\def\ssimp{\Delta}%
\global\long\def\cpx{K}%

\global\long\def\cell{C}%

\global\long\def\chain{B}%
\global\long\def\A{A}%
\global\long\def\B{B}%

\global\long\def\ach{A}%

\global\long\def\coch{X}%

\global\long\def\scale{s}%

\global\long\def\fnorm#1{\norm{#1}^{\flat}}%

\global\long\def\chains{\mathcal{A}}%

\global\long\def\ivs{\boldsymbol{U}}%

\global\long\def\mvs{\boldsymbol{V}}%

\global\long\def\cvs{\boldsymbol{W}}%

\global\long\def\ndual#1{#1'}%

\global\long\def\nd{'}%

\global\long\def\cee#1{C^{#1}}%

\global\long\def\lone{\{L^{1}\}}%

\global\long\def\linf{L^{\infty}}%

\global\long\def\lp#1{L^{#1}}%

\global\long\def\ofbdo{(\bndo)}%

\global\long\def\ofclo{(\cloo)}%

\global\long\def\vono{(\gO,\rthree)}%

\global\long\def\lomu{\{L^{1,\mu}\}}%
\global\long\def\limu{L^{\infty,\mu}}%
\global\long\def\limub{\limu(\body,\rthree)}%
\global\long\def\lomub{\lomu(\body,\rthree)}%

\global\long\def\vonbdo{(\bndo,\rthree)}%
\global\long\def\vonbdoo{(\bndoo,\rthree)}%
\global\long\def\vonbdot{(\bndot,\rthree)}%

\global\long\def\vonclo{(\cl{\gO},\rthree)}%

\global\long\def\strono{(\gO,\reals^{6})}%

\global\long\def\sob{\{W_{1}^{1}\}}%

\global\long\def\sobb{\sob(\gO,\rthree)}%

\global\long\def\lob{\lone(\gO,\rthree)}%

\global\long\def\lib{\linf(\gO,\reals^{12})}%

\global\long\def\ofO{(\gO)}%

\global\long\def\oneo{{1,\gO}}%
\global\long\def\onebdo{{1,\bndo}}%
\global\long\def\info{{\infty,\gO}}%

\global\long\def\infclo{{\infty,\cloo}}%

\global\long\def\infbdo{{\infty,\bndo}}%
\global\long\def\lobdry{\lone(\bdry\gO,\rthree)}%

\global\long\def\ld{LD}%

\global\long\def\ldo{\ld\ofO}%
\global\long\def\ldoo{\ldo_{0}}%

\global\long\def\trace{\gamma}%
\global\long\def\dtrace{\delta}%
\global\long\def\gtrace{\beta}%

\global\long\def\pr{\proj_{\rigs}}%

\global\long\def\pq{\proj}%

\global\long\def\qr{\,/\,\reals}%

\global\long\def\aro{S_{1}}%
\global\long\def\art{S_{2}}%

\global\long\def\mo{m_{1}}%
\global\long\def\mt{m_{2}}%

\global\long\def\ebdfc{T}%

\global\long\def\mini{\Omega}%
\global\long\def\optimum{s^{\mathrm{opt}}}%
\global\long\def\scf{K}%
\global\long\def\opsf{\st^{\mathrm{opt}}}%
\global\long\def\doptimum{s^{\opt,{\scriptscriptstyle D}}}%
\global\long\def\loptimum{s^{\opt,{\scriptscriptstyle \mathcal{M}}}}%

\global\long\def\fsubs{M}%

\global\long\def\yieldc{B}%

\global\long\def\yieldf{Y}%

\global\long\def\trpr{\pi_{P}}%

\global\long\def\devpr{\pi_{\devsp}}%

\global\long\def\prsp{P}%

\global\long\def\devsp{D}%

\global\long\def\ynorm#1{\|#1\|_{\yieldf}}%

\global\long\def\colls{\Psi}%

\global\long\def\aro{S_{1}}%
\global\long\def\art{S_{2}}%

\global\long\def\mo{m_{1}}%
\global\long\def\mt{m_{2}}%

\global\long\def\trps{^{\mathsf{T}}}%

\global\long\def\hb{^{\mathrm{hb}}}%

\global\long\def\yieldst{s_{Y}}%

\global\long\def\yieldc{B}%

\global\long\def\lcap{C}%

\global\long\def\yieldf{Y}%

\global\long\def\sphpr{\pi_{P}}%

\global\long\def\devpr{\pi_{\devsp}}%

\global\long\def\prsp{P}%

\global\long\def\devsp{D}%

\global\long\def\ynorm#1{\|#1\|_{\yieldf}}%

\global\long\def\colls{\Psi}%

\global\long\def\cone{Q}%
\global\long\def\fpr{\Pi}%
\global\long\def\fprd{\fpr_{\devsp}}%
\global\long\def\fprp{\fpr_{\prsp}}%
\global\long\def\find{I_{\devsp}}%
\global\long\def\finp{I_{\prsp}}%
\global\long\def\fnorm#1{\norm{#1}_{\devsp}}%

\global\long\def\rig{r}%
\global\long\def\rigs{\mathcal{R}}%
\global\long\def\qrigs{\!/\!\rigs}%
\global\long\def\anv{\omega}%
\global\long\def\I{I}%
\global\long\def\mone{M_{1}}%

\global\long\def\bd{BD}%

\global\long\def\po{\proj_{0}}%
\global\long\def\normp#1{\norm{#1}'_{\ld}}%

\global\long\def\ssx{S}%

\global\long\def\smap{s}%

\global\long\def\smat{\chi}%

\global\long\def\sx{e}%

\global\long\def\snode{P}%
\global\long\def\newmacroname{}%

\global\long\def\elem{e}%

\global\long\def\nel{L}%

\global\long\def\el{l}%

\global\long\def\gr{g}%
\global\long\def\ngr{G}%

\global\long\def\eldof{\alpha}%

\global\long\def\glbs{\psi}%

\global\long\def\ipln{\phi}%

\global\long\def\ndof{D}%

\global\long\def\dof{d}%

\global\long\def\nldof{N}%

\global\long\def\ldof{n}%

\global\long\def\lvf{\chi}%

\global\long\def\amat{A}%
\global\long\def\bmat{B}%

\global\long\def\subsp{\mathcal{M}}%
\global\long\def\zerofn{Z}%

\global\long\def\snomat{E}%

\global\long\def\femat{E}%

\global\long\def\tmat{T}%

\global\long\def\fvec{f}%

\global\long\def\snsp{\mathcal{S}}%

\global\long\def\slnsp{\Phi}%
\global\long\def\dslnsp{\Phi^{{\scriptscriptstyle D}}}%

\global\long\def\ro{r_{1}}%

\global\long\def\rtwo{r_{2}}%

\global\long\def\rth{r_{3}}%

\global\long\def\fmax{M}%

\global\long\def\dform{\psi}%

\global\long\def\srfc{\mathcal{S}}%

\global\long\def\semib{\mathrm{SB}}%

\global\long\def\tm#1{\overrightarrow{#1}}%
\global\long\def\tmm#1{\underrightarrow{\overrightarrow{#1}}}%

\global\long\def\itm#1{\overleftarrow{#1}}%
\global\long\def\itmm#1{\underleftarrow{\overleftarrow{#1}}}%

\global\long\def\ptrac{\mathcal{P}}%

\global\long\def\nh#1{\hat{#1}}%
\global\long\def\nj{\hat{\jmath}}%
\global\long\def\nJ{\hat{J}}%
\global\long\def\rin#1{\mathfrak{#1}}%
\global\long\def\npi{\hat{\pi}}%
\global\long\def\rp{\rin p}%
\global\long\def\rq{\rin q}%
\global\long\def\rr{\rin r}%

\global\long\def\xty{(\base,\fb)}%
\global\long\def\xts{(\base,\spc)}%
\global\long\def\r{r}%
\global\long\def\ntm{(\reals^{n},\reals^{m})}%

\global\long\def\tproj{\frame_{\timeman}}%
\global\long\def\sproj{\frame_{\spc}}%

\global\long\def\cons{c}%
\global\long\def\optm{\go}%
\global\long\def\flxs{\mathcal{W}}%
\global\long\def\cost{Q}%

\global\long\def\mtn{e}%
\global\long\def\sppp{\lambda}%

\global\long\def\mtsp{\mathscr{E}}%

\global\long\def\disp{g}%
\global\long\def\diffs{G}%

\global\long\def\bv{BV}%

\title[Optimal Flux Fields]{Notes on Optimal Flux Fields}
\author{Vladimir Gol'dshtein$\vphantom{N^{2}}^{1}$  and Reuven Segev$\vphantom{N^{2}}^{2}$}
\address{}
\keywords{Continuum kinematics; flux fields; optimization; capacity of regions.}
\begin{abstract}
For a given region, and specified boundary flux and density rate of
an extensive property, the optimal flux field that satisfies the balance
conditions is considered. The optimization criteria are the $L^{p}$-norm
and a Sobolev-like norm of the flux field. Finally, the capacity of
the region to accommodate various boundary fluxes and density rates
is defined and analyzed.
\end{abstract}

\date{\today\\[2mm]
$^1$ Department of Mathematics, Ben-Gurion University of the Negev, Israel. Email: vladimir@bgu.ac.il\\
$^2$ Department of Mechanical Engineering, Ben-Gurion University of the Negev, Israel. Email: rsegev@post.bgu.ac.il}
\subjclass[2000]{70A05; 74A05.}

\maketitle

\section{Introduction}

Consider the flux, or flow field, $\vf$, of an extensive property
in a given regular region, $\gO$, in a 3-dimensional Euclidean space.
The flux field, $\vf$, balances the rate of change, $\vform$, of
the density, $\dens$, of the property, as well as the flux density
on the boundary, $\sform$. This is classically expressed by the balance
conditions
\begin{equation}
\vform+\vf_{i,i}=0,\text{ in }\gO,\qquad\vf_{i}\nor_{i}=\sform,\text{ on }\bdry\gO,\label{eq:balanceC}
\end{equation}
where $\nor$ is the outwards pointing unit normal to the boundary.

Given the density rate and the flux density on the boundary, the balance
conditions above do not determine a unique flux field in $\gO$; we
have only one differential equation and one boundary condition for
the three components of the flux field. We use $\flxs_{\vform,\sform}$
to denote the set of flux fields that satisfy conditions (\ref{eq:balanceC})
for given $\vform$ and $\sform$.

Let 
\begin{equation}
\cost:\flxs_{\vform,\sform}\tto\reals
\end{equation}
be a cost function one wishes to minimize. That is, one considers
\begin{equation}
\optm_{\vform,\sform}=\inf\left\{ \cost(\vf)\mid\vf\in\flxs_{\vform,\sform}\right\} ,
\end{equation}
and studies the question of existence of minimizers.

For example, we envisage a continuous model of the traffic in a city
so that $\vf$ is the traffic density defined in the city, $\gO$.
At any particular time, there may be an estimated flux density of
traffic entering or leaving the city, as well as the required rate
of density of vehicles at the various locations in the city. Thus,
for safety reasons, one would like to find the velocity field of the
vehicles that will minimize the maximum speed taken over the city.
Concretely, one would like to minimize the $L^{\infty}$-norm of the
flux field, or generally, take
\begin{equation}
\cost(\vf)=\norm w_{L^{p}(\gO)},\qquad1\les p\les\infty.\label{eq:Lp}
\end{equation}

As another example, for a Newtonian fluid flow, one may wish to minimize
the energy dissipation, where $Q$ is given simply by 
\begin{equation}
\cost(\vf):=\int_{\gO}\vf_{i,j}\vf_{i,j}\dee V\label{eq:W12}
\end{equation}
(\emph{cf.}~\cite[p.~579--580]{Lamb32}, \cite[p.~152--153]{Batchelor67}).
An analogous cost function is suggested by the attempts to rationalize
the Kleiber rule for the geometry of animal organs (see \cite{PNAS_2010}).
For example, the geometric structure of the bronchial airways may
induce a flux field that minimizes energy dissipation during breathing.

In view of these observations, this article presents some notes concerning
the problem of optimizing flux fields (cf. \cite[Section 24.2.5]{Segev_Book_2023}).

In Section \ref{sec:SmoothExtensive}, considering smooth fields,
we review the basic fields and balance conditions corresponding to
a generic extensive property for which a flux vector field exists.
In particular, we present the functional forms of the balance conditions,
where the various fields act on real valued functions, $\ptnl$. Such
a test function, $\ptnl$, is interpreted physically as a potential
function for the extensive property, or as a resource density for
the ecological interpretation.

Section \ref{sec:Classical} presents a classical calculus of variation
analysis for the case where
\begin{equation}
\cost(\vf):=\int_{\gO}L(w_{i},w_{j,k})\dee V,
\end{equation}
for some real valued Lagrangian function $L:\reals^{12}\to\reals$.
It is not surprising that the analysis is analogous to that pertaining
to hyperelastic materials. 

The general construction for the analysis of optimization problems,
as in (\ref{eq:Lp}) and (\ref{eq:W12}), is outlined roughly in Section
\ref{sec:Weak-Formulation}. Trying to omit the technical details
as much as possible, no justification is given as to why the operations
performed are valid. The balance conditions are presented as an equality
of two bounded linear functionals on the space of potential functions.

Some basic properties of Sobolev spaces are reviewed briefly in Section
\ref{sec:Analytic-Preliminaries}. These properties are used later
in the analysis of the optimization problems corresponding to (\ref{eq:Lp})
and (\ref{eq:W12}).

The optimization of the $L^{p}$-norm of the flux field, as in (\ref{eq:Lp}),
is based on taking the space of potential function to be the Sobolev
space $\dot{W}^{1,q}(\gO)$, containing functions on $\gO$ having
zero average, and the partial derivatives of which are $L^{q}$, for
$q=p/(p-1)$. The analysis in Section \ref{sec:Lq-opt} proves the
existence of a minimizing flux field $\vf^{\opt}\in L^{p}(\gO)^{3}$.
Moreover, it is shown that 
\begin{equation}
\go_{(\vform,\sform)}=\norm{\vf^{\opt}}_{L^{p}(\gO)^{3}}=\sup\left\{ \frac{\int_{\gO}\vform\ptnl\dee{V+\int_{\bdry\gO}\sform\ptnl\dee A}}{\left[\int_{\gO}\abs{\nabla\ptnl}^{q}\dee V\right]^{1/q}}\mid\ptnl\in C^{\infty}(\cl{\gO})\right\} .
\end{equation}

For the minimization of (\ref{eq:W12}), we use the same general construction,
but here, the space of potential functions is taken as $\dot{W}^{2,2}(\gO)$.
This is space of equivalence classes of functions on $\gO$ under
the equivalence relation setting $\ptnl\sim\ptnl'$ if both functions
have the same second derivatives. In addition, it is required that
the second distributional derivatives of elements of $\dot{W}^{2,2}(\gO)$
be $L^{2}$. Here, a minimizer not only exists, but it is also unique.
The optimal value is given by
\begin{equation}
\go_{(\vform,\sform)}=\sup\left\{ \frac{\int_{\gO}\ptnl\dee{\vform+\int_{\bdry\gO}\ptnl\dee{\sform}}}{\left[\int_{\gO}\ptnl_{,ij}\ptnl_{,ij}\dee V\right]^{1/2}}\mid\ptnl\in C^{\infty}(\cl{\gO})\right\} .
\end{equation}

It is noted that in both optimization problems above, the data $\vform$
and $\sform$ may be irregular. For problem (\ref{eq:W12}), they
may be Radon measures on $\gO$ and its boundary, respectively.

The analysis of Section \ref{sec:Capacity} is relevant to the following
situation. Imagine that a control system navigates the flux field
so that it is optimal for given data $\vform$ and $\sform$. However,
the admissible flux fields are bounded by the condition $\norm{\vf}\les M$,
for a given bound $M$, reflecting the limitations of the system.
It is shown that there is a minimal number $C$, referred to as the
capacity of the region $\gO$, such that if $\norm{(\vform,\sform)}\les CM$,
the optimal flux field corresponding to $(\vform,\sform)$ is admissible\textemdash its
norm is not greater than $M$. The expression for the capacity is
shown to be related to the trace mapping of vector fields onto $\bdry\gO$.

\section{\label{sec:SmoothExtensive}Smoothly distributed extensive properties
and fluxes}

We consider the balance law for an extensive property, $\expr$, in
the physical space $\spc$. In an ecological context, one may conceive
the property, $\expr$, as a continuous distribution of a population
in the region $\gO\subset\spc$. In the analysis below, $\spc$ is
identified with $\rthree$, and $\gO$ is assumed to be a bounded
open subset with a smooth boundary. Most of what follows holds in
the more general case where the boundary, $\bdry\gO$, is only Lipschitz
continuous.

\subsection{Basic fields}

The density of the extensive property $\expr$ is given as a smooth
function $\dens$ defined in $\rthree$. Thus, for the region, $\gO$,
the total amount of the property in $\reg$, at any time, $t\in\reals$,
is given by
\begin{equation}
\expr(\reg)=\ino{\reg}\dens\dee V.
\end{equation}
In what follows, except for the region $\reg$ itself, all quantities
associated with the property $\expr$ will be time-dependent, although
we do not indicate this in the notation.

For the rate of change,
\begin{equation}
\frac{\dee{\expr(\reg)}}{\dee t}=\ino{\reg}\vform\dee V,\qquad\text{where,}\qquad\vform:=\frac{\dee{\dens}}{\dee t}.
\end{equation}

It is usually assumed in continuum mechanics that the time-derivative
$\dee{\expr}(\reg)/\dee t$ is a result of a source (or a sink) density
of the property in space and the total flux, $\Phi$, of the property,
which exits $\reg$ through the boundary, $\bdry\reg$. The \emph{source
density} of the property is given by a smooth function, $\pform$,
 so that the total production of the property in $\reg$ is given
by 
\begin{equation}
\ino{\reg}\pform\dV.
\end{equation}

The total flux on $\bdom$ is assumed to be given in terms of a smooth
function $\sform_{\gO}$ on $\bdry\reg$\textemdash the \emph{flux
density}\textemdash in the form
\begin{equation}
\Phi=\ino{\partial\reg}\sform_{\gO}\dA.\label{eq:Tot_Flux_and_Density}
\end{equation}
Thus, the classical balance law assumes the form 
\begin{equation}
\ino{\reg}\vform\dee V=\ino{\reg}\pform\dee V-\ino{\partial\reg}\sform_{\gO}\dee A.\label{eq:BalanceOnRegions}
\end{equation}

\subsection{The flux vector field}

If the boundary flux density, $\sform_{\mc{\mc R}}$, for the boundary
of any sub-region, $\mc R$, of $\gO$ satisfies the Cauchy postulates,
there is a flux vector field, $\flow$, defined in $\reg$, such that
for each sub-region, $\mc R$,
\begin{equation}
\sform_{\mc R}=\flow\cdot\nor_{\mc R},
\end{equation}
where $\nor_{\mc R}$ is the unit normal vector to $\bdry\mc R$.

Thus, Equation (\ref{eq:BalanceOnRegions}) is rewritten as
\begin{equation}
\ino{\mc R}\vform\dee V=\ino{\mc R}\pform\dee V-\ino{\partial\mc R}\flow\cdot\nor_{\mc R}\dee A.
\end{equation}
Using the Gauss theorem, and noting that the equation holds for every
sub-region $\mc R$, one obtains 
\begin{equation}
\vform+\nabla\cdot\flow=\pform.\label{eq:Balance_Smooth}
\end{equation}
When one views the last equation as a differential equation for the
flux field $\flow$ for given $\vform$ and $\pform$, supplemented
by the boundary condition 
\begin{equation}
\sform_{\gO}=\flow\cdot\nor_{\gO}\qquad\text{that we write for short as}\qquad\sform=\flow\cdot\nor,\label{eq:B-C}
\end{equation}
one notes that the system is under-determined. There is a class, $\mc B_{\gO}$,
of flux vector fields that satisfy these conditions.

\subsection{\label{subsec:functional-form}The functional form of the balance
equation}

Let $\ptnl$ be any smooth function defined on $\cl{\gO}$, which
we can interpret as a potential for the property $\expr$. Multiplying
the balance equation (\ref{eq:Balance_Smooth}) by $\ptnl$ and integrating
over $\gO$, one has,
\begin{equation}
\int_{\gO}\ptnl\vform\dee V+\int_{\gO}\ptnl\,\nabla\cdot\flow\dee V=\int_{\gO}\ptnl\pform\dee V.
\end{equation}
Using integration by parts, the Gauss theorem, and Equation (\ref{eq:B-C}),
the last equation is rewritten as 
\begin{equation}
\int_{\gO}\ptnl\vform\dee V+\int_{\bdry\gO}\ptnl\,\sform\dee V=\int_{\gO}\ptnl\pform\dee V+\int_{\gO}\nabla\ptnl\cdot\flow\dee V,
\end{equation}
or using components,
\begin{equation}
\int_{\gO}\ptnl\vform\dee V+\int_{\bdry\gO}\ptnl\,\sform\dee V=\int_{\gO}\ptnl\pform\dee V+\int_{\gO}\ptnl_{,i}\cdot\flow_{i}\dee V.\label{eq:weak-balance}
\end{equation}
The first integral on the left is interpreted as the rate of change
of the energy associated with the property $\expr$ in the region
$\gO$; the second integral on the left is interpreted as the energy
flux out of the region due to the flow $\flow$; the first integral
on the right is interpreted as the energy source in $\gO$ due to
the source of $\expr$; and the second term on the right is interpreted
as the change of total energy due to the flow of the property along
the potential gradient.

\section{\label{sec:Classical}The Classical Variational Analysis}

Let $\gO$ be a domain in $\rthree$ with a regular enough boundary
so that the Gauss theorem applies. Let $\vf$ be a flux field defined
in $\gO$ that satisfies the boundary conditions 
\begin{equation}
\vf\cdot\nor=\sform,\label{eq:bdry_flux}
\end{equation}
where, $\sform$ is a given flux distribution on the boundary and
$\nor$ is the unit outwards pointing normal. Assume, in addition,
that the flow field has to be compatible with a prescribed continuous
distribution $\vform$ of the rate of change of the density. In addition,
we assume that the property is conserved, so that $\pform=0$. Hence,
\begin{equation}
\vform+\vf_{i,i}=0.\label{eq:sinks}
\end{equation}

A classical optimization problem for the flux is given by a functional
\begin{equation}
\cost(\vf)=\int_{\gO}L(w_{i},w_{j,k})\dee V,\label{eq:functional}
\end{equation}
for a given smooth function 
\begin{equation}
L:\reals^{12}\tto\reals.
\end{equation}
For example, one may consider the minimization of the dissipation
power for which 
\begin{equation}
L(w_{i},w_{j,k})=\shalf\vf_{j,k}\vf_{j,k}.\label{eq:dissipation}
\end{equation}

Thus, for an optimal flux field one has to minimize $\cost(\vf)$
as given by (\ref{eq:functional}) subject to the constraint (\ref{eq:sinks})
and the constraint (\ref{eq:bdry_flux}) on the boundary, for given
functions $\vform$ and $\sform$ defined in $\gO$ and its boundary,
$\bdom$, respectively.

To account for the constraint (\ref{eq:functional}) in the variational
computation, we use a Lagrange multiplier function $\gl:\gO\tto\reals$
and consider the augmented functional
\begin{equation}
J:=\int_{\gO}[L(w_{i},w_{j,k})+\gl(\vform+\vf_{i,i})]\dee V.
\end{equation}
A necessary condition for $J$ to attain an extremal value at $\vf$
is 
\begin{equation}
DJ_{\vf}(u)=\parder[\eps]{}\left.\int_{\gO}[L(w_{i}+\eps u_{i},w_{i,j}+\eps u_{i,j})+\gl(\vform+\vf_{i,i}+\eps u_{i,i})]\dV\right|_{\eps=0}=0,
\end{equation}
for all variations $u$ that are compatible with the boundary constraint
(\ref{eq:bdry_flux}). Specifically 
\begin{equation}
(\vf+\eps u)\cdot\nor=\vf\cdot\nor=\sform,
\end{equation}
so we conclude that 
\begin{equation}
u\cdot\nor=0,
\end{equation}
on $\bdom$.

Carrying out the differentiation gives
\begin{equation}
\begin{split}0 & =\int_{\gO}\left[\frac{\bdry L}{\bdry\vf_{i}}u_{i}+\frac{\bdry L}{\bdry\vf_{i,j}}u_{i,j}+\gl u_{j,j}\right]\dee V,\\
 & =\int_{\gO}\left[\frac{\bdry L}{\bdry\vf_{i}}-\left(\frac{\bdry L}{\bdry\vf_{i,j}}\right)_{,j}-\gl_{,i}\right]u_{i}\dee V+\int_{\bdom}\left[\frac{\bdry L}{\bdry\vf_{i,j}}u_{i}\nor_{j}+\gl u_{j}\nor_{j}\right]\dee A,
\end{split}
\end{equation}
where it is noted that the second term in the integral over the boundary
vanishes.

Setting
\begin{equation}
\bfc_{i}:=-\frac{\bdry L}{\bdry\vf_{i}},\qquad\st_{ij}:=\frac{\bdry L}{\bdry\vf_{i,j}},\qquad\sfc_{i}=\st_{ij}\nor_{j},
\end{equation}
we have the conditions
\begin{equation}
\st_{ij,j}+\gl_{,i}+\bfc_{i}=0,\quad\text{in }\gO,\qquad\sfc_{i}u_{i}=0,\quad\text{on }\bdom.
\end{equation}
As $u$ is tangent to the boundary, we conclude that $\sfc$ must
be normal to the boundary.

For the particular case where $L$ is given by (\ref{eq:dissipation}),
we have $\bfc_{i}=0$, $\st_{ij}=\vf_{i,j}$ and one obtains the conditions
\begin{equation}
\vf_{i,jj}+\gl_{,i}=0,\quad\text{in }\gO,\qquad\vf_{i,j}n_{j}u_{i}=0,\quad\text{on }\bdom.
\end{equation}
Hence, on the boundary, the normal derivative of $\vf$ is perpendicular
to the boundary.

\section{The Weak Formulation of Balance\label{sec:Weak-Formulation}}

In this section, we roughly outline the structure needed for a weak
formulation of the balance law. The functional form of balance, as
presented in Equation (\ref{eq:weak-balance}), suggests that we view
the balance equation as an equality of two linear functionals acting
on an appropriate Banach space $\Psi$ of potential functions, $\psi$,
defined on $\gO$. The subset $\gO\subset\rthree$ is assumed to be
open and bounded, and the space $\Psi$ is assumed to contain the
restrictions of differentiable functions defined on $\cl{\gO}$. The
operations outlined in this general formal description are valid for
$C^{1}(\cl{\gO})$, and it is further assumed that these operations
may be extended to the entire space $\Psi$. Examples of such spaces
are given in Sections \ref{sec:Lq-opt} and \ref{sec:Dissipation-Optimization-in-W12}
below. 

We view the left-hand side of (\ref{eq:weak-balance}) as the action
of a pair of linear functionals $(\wh{\vform},\wh{\sform})$ acting
on $\Psi$ such that 
\begin{equation}
\wh{\vform}(\ptnl)=\int_{\gO}\ptnl\vform\dee V,\qquad\wh{\sform}(\ptnl\resto{\bdry\gO})=\int_{\bdry\gO}\ptnl\sform\dee A.
\end{equation}
We may now write the left-hand side of (\ref{eq:weak-balance}) in
the form
\begin{equation}
\int_{\gO}\ptnl\vform\dee V+\int_{\bdry\gO}\ptnl\,\sform\dee A=(\wh{\vform},\wh{\sform})(\ptnl,\ptnl\resto{\bdry\gO}).
\end{equation}

Let $\trace(\psi)=\psi\resto{\bdry\gO}$, and let $\gd:=(\idnt,\trace)$,
that is,
\begin{equation}
\gd(\ptnl)=(\idnt,\trace)(\ptnl)=(\ptnl,\ptnl\resto{\bdry\gO}),
\end{equation}
and we may further write
\begin{equation}
\int_{\gO}\ptnl\vform\dee V+\int_{\bdry\gO}\ptnl\,\sform\dee A=(\wh{\vform},\wh{\sform})(\gd(\ptnl)).
\end{equation}
Next, consider the dual mapping $\gd^{*}$ of $\gd$ such that
\begin{equation}
(\wh{\vform},\wh{\sform})(\gd(\ptnl))=\gd^{*}(\wh{\vform},\wh{\sform})(\ptnl),
\end{equation}
then, we finally write the left-hand side of (\ref{eq:weak-balance})
as $\gd^{*}(\wh{\vform},\wh{\sform})(\ptnl)$.

Here, we view $(\wh{\vform},\wh{\sform})$ as general continuous linear
functionals in the appropriate dual spaces. For the operator $\gd$
to be well-defined, a trace mapping, $\trace$, of non-smooth elements
in $\Psi$, extending the restriction of continuous functions on $\cl{\gO}$
to $\bdry\gO$, should be available.

The right hand side of (\ref{eq:weak-balance}) may also be expressed
in terms of two linear functionals
\begin{equation}
\wh{\pform}(\ptnl)=\int_{\gO}\ptnl\pform\dee V,\qquad\wh{\vf}(\nabla\ptnl)=\int_{\bdry\gO}\nabla\ptnl\cdot\vf\dee V.
\end{equation}
Thus, 
\begin{equation}
\int_{\gO}\ptnl\pform\dee V+\int_{\gO}\nabla\ptnl\cdot\flow\dee V=(\wh{\pform},\wh{\vf})(\ptnl,\nabla\ptnl).
\end{equation}

Consider the linear operator
\begin{equation}
j=(\idnt,\nabla):\psi\lmt(\ptnl,\nabla\ptnl),
\end{equation}
so that the right hand side of (\ref{eq:weak-balance}) may be written
as
\begin{equation}
(\wh{\pform},\wh{\vf})(j(\ptnl))=j^{*}(\wh{\pform},\wh{\vf})(\ptnl).
\end{equation}

It is concluded that (\ref{eq:weak-balance}) may be rewritten in
the form
\begin{equation}
\gd^{*}(\wh{\vform},\wh{\sform})(\ptnl)=j^{*}(\wh{\pform},\wh{\vf})(\ptnl),
\end{equation}
 and as $\ptnl$ is arbitrary, 
\begin{equation}
\gd^{*}(\wh{\vform},\wh{\sform})=j^{*}(\wh{\pform},\wh{\vf})=\fc\label{eq:weak_Balance}
\end{equation}
where $\fc$ is a linear functional on $\Psi$.

This construction is illustrated in the following diagrams.
\begin{equation}
\xymatrix{(\ptnl,\trace(\ptnl)) & \ptnl\ar@{|->}[r]\sp(0.35){j}\ar@{|->}[l]\sb(0.35){\gd} & (\ptnl,\nabla\ptnl)\\
(\wh{\vform},\wh{\sform})\ar@{|->}[r]\sp(0.55){\gd^{*}} & \fc & \ar@{|->}[l]\sb(0.55){j^{*}}(\wh{\pform},\wh{\flow})
}
\label{eq:diagram-elements-2}
\end{equation}
To apply this framework, one has to define appropriately the function
space $\Psi$ so that the mappings $\trace$ and $j$ are well-defined.

With this framework, we can describe the optimization problem as finding
an optimal pair, $(\wh{\pform},\wh{\vf})$, the satisfies Equation
(\ref{eq:weak_Balance}) for a given pair $(\wh{\vform},\wh{\sform})$.

In formulating the optimization problem, we will consider only the
case where $\wh{\pform}=0$. This is motivated by the assumption that
while, in principle, one may be able to control the velocity distribution
$\vf$, one will not be able to introduce a source distribution due
to practical constraints. This implies that the diagrams above are
modified into
\begin{equation}
\xymatrix{(\ptnl,\trace(\ptnl)) & \ptnl\ar@{|->}[r]\sp(0.35){\nabla}\ar@{|->}[l]\sb(0.35){\gd} & \nabla\ptnl\\
(\wh{\vform},\wh{\sform})\ar@{|->}[r]\sp(0.55){\gd^{*}} & \fc & \ar@{|->}[l]\sb(0.55){\nabla^{*}}\wh{\flow}.
}
\label{eq:diagram-elements-2-1}
\end{equation}
The weak balance condition is accordingly modified to
\begin{equation}
\gd^{*}(\wh{\vform},\wh{\sform})=\nabla^{*}(\wh{\vf})=\fc.\label{eq:weak_Balance-1}
\end{equation}

\section{Analytic Preliminaries\label{sec:Analytic-Preliminaries}}

In this section, we summarize the basic properties of Sobolev spaces,
which make them suitable candidates for the function spaces needed
to materialize the framework described roughly in Section \ref{sec:Weak-Formulation}.

Let $\gO\subset\rthree$ be a bounded open set with Lipschitz boundary,
and let $W^{m,p}(\gO)$ be the Sobolev space of functions defined
on $\gO$, of which the $m$-th distributional derivatives are $L^{p}$-integrable.
For a multi-index $\ga=(\ga_{1},\ga_{2},\ga_{3})$, set $\abs{\ga}=\ga_{1}+\ga_{2}+\ga_{3}$.
Given $\psi\in W^{m,p}(\gO)$, let
\begin{equation}
\bdry_{\ga}\psi:=\frac{\partial^{\abs{\ga}}\psi}{\bdry x_{1}^{\ga_{1}}\bdry x_{2}^{\ga_{2}}\bdry x_{3}^{\ga_{3}}}
\end{equation}
and 
\begin{equation}
\nabla_{m}\psi=\{\bdry_{\ga}\psi\mid\abs{\ga}=m\}.
\end{equation}

The Sobolev spaces are Banach spaces using the norms
\begin{equation}
\norm{\psi}_{W^{m,p}(\gO)}=\sum_{0\les\abs{\ga}\les m}\norm{\bdry_{\ga}\psi}_{L^{p}(\gO)},
\end{equation}
where alternative equivalent norms may be given.

If $mp>3$, or $m=3$ and $p=1$, then, there is an embedding (see
\cite[p. 85]{Adams})
\begin{equation}
W^{m,p}(\gO)\tto C^{0}(\ol{\gO}).\label{eq:emb_cont}
\end{equation}
If $mp<3$, and $q$ satisfies $p\les q\les2p/(3-mp)$, then there
is a linear and continuous trace operator (see \cite[p. 164]{Adams}),
\begin{equation}
\trace:W^{m,p}(\gO)\tto L^{q}(\bdry\gO).
\end{equation}
It is noted that for the case $mp>3$, the existence of the trace
operator is implied by (\ref{eq:emb_cont}), as functions can be extended
continuously to the boundary.

In the following, we describe the construction of the subspaces $\dot{W}^{m,p}(\gO)$
(denoted in \cite{Mazja} by $\dot{L}_{p}^{m}(\gO)$) of the Sobolev
spaces.

\begin{myprop}[\textbf{Poincar\'e, Sobolev}]

Let $P^{m}$ be the subspace of polynomials in $\gO$ of degree smaller
or equal to $m$. Then, there is a unique continuous and linear projection
\begin{equation}
\pi_{P}:W^{m,p}(\gO)\tto P^{m-1}
\end{equation}
such that 
\begin{equation}
\sum_{k=0}^{m-1}\norm{\nabla_{k}(u-\pi_{P}(u))}_{L^{p}(\gO)}\les C\norm{\nabla_{m}u}_{L^{p}(\gO)},
\end{equation}
for a constant $C>0$ and all $u\in W^{m,p}$. Here, 
\begin{equation}
\norm{\nabla_{m}u}_{L^{p}(\gO)}:=\sum_{\abs{\ga}=m}\norm{\partial_{\ga}u}_{L^{p}(\gO)}.
\end{equation}

In particular, for the case $m=1$, $\pi_{P}:W^{1,p}(\gO)\tto P^{0}=\reals$
is given by 
\begin{equation}
u\lmt\frac{1}{\abs{\gO}}\int_{\gO}u\dee V,
\end{equation}
where $\abs{\gO}$ denotes the volume of $\gO$.

\end{myprop}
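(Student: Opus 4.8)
The plan is to construct $\pi_{P}$ explicitly by moment matching and then to obtain the estimate by a compactness argument. First I would \emph{define} the projection. For a polynomial $P\in P^{m-1}$ consider the finite family of averaged derivatives $\{\int_{\gO}\bdry_{\ga}P\dee V:\abs{\ga}\les m-1\}$. The number of such multi-indices equals $\dim P^{m-1}$, so this is a square linear system in the coefficients of $P$, and it is invertible by downward induction on the degree: for $\abs{\ga}=m-1$ the derivative $\bdry_{\ga}P$ is a constant multiple of the top-degree coefficient of index $\ga$, so $\int_{\gO}\bdry_{\ga}P\dee V$ determines the degree-$(m-1)$ coefficients, after which one descends. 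Hence there is a unique $\pi_{P}(u)\in P^{m-1}$ with $\int_{\gO}\bdry_{\ga}(\pi_{P}(u))\dee V=\int_{\gO}\bdry_{\ga}u\dee V$ for every $\abs{\ga}\les m-1$. This map is linear, restricts to the identity on $P^{m-1}$ (so it is a projection), and is continuous, since on the bounded domain $\abs{\int_{\gO}\bdry_{\ga}u\dee V}\les\norm{\bdry_{\ga}u}_{L^{1}(\gO)}\les C\norm{u}_{W^{m,p}(\gO)}$ while recovering the coefficients of $\pi_{P}(u)$ from these moments is a fixed finite-dimensional linear map. For $m=1$ the single condition $\int_{\gO}\pi_{P}(u)\dee V=\int_{\gO}u\dee V$ forces the constant $\pi_{P}(u)=\frac{1}{\abs{\gO}}\int_{\gO}u\dee V$, recovering the stated formula.

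The heart of the matter is the inequality, which I would prove by contradiction using compactness; I expect this to be the main obstacle, since it is where the geometry of $\gO$ and the choice of function space enter. Assume $\gO$ is connected (otherwise argue componentwise) and suppose no constant $C$ works. Then there are $u_{n}\in W^{m,p}(\gO)$ with $\sum_{k=0}^{m-1}\norm{\nabla_{k}(u_{n}-\pi_{P}(u_{n}))}_{L^{p}(\gO)}=1$ while $\norm{\nabla_{m}u_{n}}_{L^{p}(\gO)}\to0$. Put $v_{n}:=u_{n}-\pi_{P}(u_{n})$; since $\pi_{P}$ is a projection, $\pi_{P}(v_{n})=0$, the sum of lower-order seminorms equals $1$, and $\norm{\nabla_{m}v_{n}}_{L^{p}(\gO)}=\norm{\nabla_{m}u_{n}}_{L^{p}(\gO)}\to0$, so $\{v_{n}\}$ is bounded in $W^{m,p}(\gO)$. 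By the Rellich--Kondrachov theorem on the bounded Lipschitz domain $\gO$, the embedding $W^{m,p}(\gO)\tto W^{m-1,p}(\gO)$ is compact, so along a subsequence $v_{n}\to v$ in $W^{m-1,p}(\gO)$.

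I would then identify the limit. Since $\nabla_{m}v_{n}\to0$ in $L^{p}(\gO)$ and $v_{n}\to v$ in the sense of distributions, the $m$-th distributional derivatives of $v$ vanish by uniqueness of distributional limits; on a connected open set this forces $v\in P^{m-1}$. Continuity of $\pi_{P}$ together with $\pi_{P}(v_{n})=0$ gives $\pi_{P}(v)=0$, and as $\pi_{P}$ is the identity on $P^{m-1}$ we conclude $v=0$. On the other hand, convergence in $W^{m-1,p}(\gO)$ means $\nabla_{k}v_{n}\to\nabla_{k}v$ in $L^{p}(\gO)$ for every $k\les m-1$, whence $\sum_{k=0}^{m-1}\norm{\nabla_{k}v}_{L^{p}(\gO)}=\lim_{n}\sum_{k=0}^{m-1}\norm{\nabla_{k}v_{n}}_{L^{p}(\gO)}=1$, contradicting $v=0$. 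This produces the desired constant $C$.

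Regarding uniqueness, I would note that the estimate by itself does not single out the projection, as two projections onto $P^{m-1}$ both satisfying it differ only by a $P^{m-1}$-valued operator controlled by the top-order seminorm. The uniqueness is therefore to be understood relative to the canonical moment-matching normalization used above, which determines $\pi_{P}$ completely and which reduces to averaging when $m=1$; this is the projection for which the estimate is asserted.
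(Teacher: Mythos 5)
The paper does not prove this proposition at all; it simply cites Maz'ja and Leoni. Your argument is correct and is, in substance, the standard proof that those references contain: the moment-matching construction of the projection (the triangular system obtained by descending through $\abs{\ga}=m-1,m-2,\dots,0$ is exactly Maz'ja's normalization, and it does reduce to the mean value when $m=1$), followed by the compactness contradiction via Rellich--Kondrachov to get the Poincar\'e--Sobolev estimate. Two small points are worth tightening. First, in the limit identification you invoke continuity of $\pi_{P}$, but the convergence $v_{n}\to v$ you have is only in $W^{m-1,p}(\gO)$; this is harmless precisely because your projection is built from the moments $\int_{\gO}\bdry_{\ga}u\dee V$ with $\abs{\ga}\les m-1$, which are continuous on $W^{m-1,p}(\gO)$, but you should say so. Second, connectedness of $\gO$ is genuinely needed and your parenthetical ``argue componentwise'' does not rescue the statement as written: on a disconnected $\gO$ the kernel of $\nabla_{m}$ consists of functions that are polynomial on each component, which is strictly larger than $P^{m-1}$, and the inequality fails (take $m=1$ and a function equal to different constants on two components). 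The proposition implicitly assumes $\gO$ connected. Finally, your observation that the estimate alone cannot single out $\pi_{P}$ --- so that the claimed ``uniqueness'' only makes sense relative to the moment-matching normalization --- is accurate and is a fair criticism of the statement itself; the paper, deferring entirely to the references, never addresses this.
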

\begin{proof}
See \cite[p. 22]{Mazja} and \cite[Section 13.2]{Leoni2010}. 
\end{proof}

\begin{myprop}\label{prop:norm-dot}Let 
\begin{equation}
\dot{W}^{m,p}(\gO):=W^{m,p}(\gO)/P^{m-1}.
\end{equation}
For any $\dot{u}\in\dot{W}^{m,p}(\gO)$, set
\begin{equation}
\norm{\dot{u}}_{\dot{W}^{m,p}(\gO)}:=\norm{\nabla_{m}u}_{L^{p}(\gO)},
\end{equation}
where $u\in W^{m,p}(\gO)$ is any representative of the equivalence
class $\dot{u}$. Then, $\norm{\cdot}_{\dot{W}^{m,p}}$ is a norm
on $\dot{W}^{m,p}(\gO)$.

\end{myprop}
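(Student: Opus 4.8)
The plan is to verify, in turn, that the formula is independent of the chosen representative and then that it satisfies the norm axioms, isolating the separation (definiteness) axiom as the only substantive point. The first three items are formal consequences of the linearity of the map $u\mapsto\nabla_{m}u$ and the fact that the $m$-th distributional derivatives of elements of $W^{m,p}(\gO)$ are, by definition, $L^{p}$-integrable, so that $\norm{\nabla_{m}u}_{L^{p}(\gO)}$ is finite.

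First I would check well-definedness. If $u,u'\in W^{m,p}(\gO)$ both represent the class $\dot{u}$, then $u-u'\in P^{m-1}$, and since every polynomial of degree at most $m-1$ has vanishing derivatives of order $m$, we get $\nabla_{m}u=\nabla_{m}u'$ as elements of $L^{p}(\gO)$. Hence $\norm{\nabla_{m}u}_{L^{p}(\gO)}=\norm{\nabla_{m}u'}_{L^{p}(\gO)}$, so the value assigned to $\dot{u}$ does not depend on the representative. Next, representing $\gl\dot{u}$ by $\gl u$ and $\dot{u}+\dot{v}$ by $u+v$, absolute homogeneity and the triangle inequality follow termwise from those of the $L^{p}$-norm: $\nabla_{m}(\gl u)=\gl\nabla_{m}u$ gives $\norm{\gl\dot{u}}=\abs{\gl}\norm{\dot{u}}$, while $\nabla_{m}(u+v)=\nabla_{m}u+\nabla_{m}v$ together with $\norm{\nabla_{m}w}_{L^{p}(\gO)}=\sum_{\abs{\ga}=m}\norm{\partial_{\ga}w}_{L^{p}(\gO)}$ (a finite sum of genuine $L^{p}$-norms) yields subadditivity. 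Nonnegativity is immediate.

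The one axiom that is not formal is definiteness, and this is where I would invoke the preceding Poincar\'e--Sobolev proposition. Suppose $\norm{\dot{u}}_{\dot{W}^{m,p}(\gO)}=\norm{\nabla_{m}u}_{L^{p}(\gO)}=0$. The inequality supplied by that proposition gives $\sum_{k=0}^{m-1}\norm{\nabla_{k}(u-\pi_{P}(u))}_{L^{p}(\gO)}\les C\norm{\nabla_{m}u}_{L^{p}(\gO)}=0$; in particular the $k=0$ term forces $u=\pi_{P}(u)$ almost everywhere. Since $\pi_{P}(u)\in P^{m-1}$, the representative $u$ lies in $P^{m-1}$, so $\dot{u}=0$ in the quotient $W^{m,p}(\gO)/P^{m-1}$.

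I expect this separation step to be the main obstacle conceptually, because it is precisely the assertion that a function with vanishing $m$-th derivatives must be a polynomial of degree less than $m$ --- a statement that secretly requires the connectedness of $\gO$ and the structure theory of distributions with vanishing derivatives. Having the Poincar\'e--Sobolev proposition already established reduces this step to a one-line application of its estimate, so the whole verification becomes routine; the only care needed is to read off the $k=0$ summand, which is exactly the $L^{p}$-distance of $u$ from its polynomial projection.
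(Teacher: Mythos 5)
Your proof is correct. The paper itself gives no argument here---it simply cites Maz'ja---and your derivation (well-definedness because $\nabla_{m}$ annihilates $P^{m-1}$, the seminorm axioms inherited from $L^{p}$, and definiteness read off from the $k=0$ term of the preceding Poincar\'e--Sobolev estimate, which places any representative with $\norm{\nabla_{m}u}_{L^{p}(\gO)}=0$ in $P^{m-1}$) is exactly the standard route the cited reference takes, correctly identifying definiteness as the only non-formal step.
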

\begin{proof}
See \cite[p. 24]{Mazja}.
\end{proof}
\begin{cor}
\label{cor:Wdot-subspace}We may identify $\dot{W}^{m,p}(\gO)$ with
the subspace $\kernel\pi_{P}$ of $W^{m,p}(\gO)$. Thus, the projection
on the quotient space
\begin{equation}
\dot{\pi}:W^{m,p}(\gO)\tto\dot{W}^{m,p}(\gO):=W^{m,p}(\gO)/P^{m-1}
\end{equation}
is given by the mapping
\begin{equation}
u\lmt u-\pi_{P}(u).
\end{equation}
The right inverse of $\dot{\pi}$ is therefore the inclusion 
\begin{equation}
\xymatrix{\kernel\pi_{P}\ar@{^{(}->}[r]^{\incl} & W^{m,p}(\gO).}
\end{equation}

In particular, for the case $m=1$, $\dot{W}^{1,p}(\gO)$ may be identified
with the subspace of $W^{1,p}(\gO)$ containing functions of zero
average.
\end{cor}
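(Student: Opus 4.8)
The plan is to treat this as the standard splitting induced by a bounded idempotent projection, drawing all the analytic content from the two preceding propositions. First I would verify that $\pi_{P}$ is genuinely idempotent, i.e.\ that it restricts to the identity on $P^{m-1}$. This is immediate from the Poincar\'e--Sobolev inequality: if $u\in P^{m-1}$ then $\nabla_{m}u=0$, so the right-hand side of that inequality vanishes, and in particular its $k=0$ term forces $\norm{u-\pi_{P}(u)}_{L^{p}(\gO)}=0$, whence $\pi_{P}(u)=u$. Consequently $\pi_{P}\comp\pi_{P}=\pi_{P}$, the bounded operator $\idnt-\pi_{P}$ is the complementary projection with range $\kernel\pi_{P}$, and we obtain the topological direct sum $W^{m,p}(\gO)=P^{m-1}\oplus\kernel\pi_{P}$. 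Uniqueness of the decomposition follows from $P^{m-1}\cap\kernel\pi_{P}=\{0\}$, since an element of the intersection is a polynomial both fixed and annihilated by $\pi_{P}$, hence zero.

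Next I would show that the canonical quotient map $q\colon W^{m,p}(\gO)\tto W^{m,p}(\gO)/P^{m-1}$ restricts to a linear isomorphism on $\kernel\pi_{P}$. For surjectivity, every class is represented by $u-\pi_{P}(u)\in\kernel\pi_{P}$, because $\pi_{P}(u)\in P^{m-1}$; for injectivity, if $v\in\kernel\pi_{P}$ lies in $P^{m-1}$ then $v\in P^{m-1}\cap\kernel\pi_{P}=\{0\}$. This $q|_{\kernel\pi_{P}}$ is exactly the identification asserted by the corollary, and under it the quotient projection $\dot{\pi}$ becomes the map $u\lmt u-\pi_{P}(u)$, since the class of $u$ corresponds to the representative $u-\pi_{P}(u)$. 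The inclusion $\incl\colon\kernel\pi_{P}\hookrightarrow W^{m,p}(\gO)$ is then a right inverse of $\dot{\pi}$: for $v\in\kernel\pi_{P}$ one has $\dot{\pi}(\incl(v))=v-\pi_{P}(v)=v$, using $\pi_{P}(v)=0$.

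Finally I would upgrade this algebraic isomorphism to an isomorphism of Banach spaces, which is where the genuine work has already been supplied. By Proposition \ref{prop:norm-dot} the norm of a class $\dot{u}$ equals $\norm{\nabla_{m}u}_{L^{p}(\gO)}$; since $\pi_{P}(u)\in P^{m-1}$ has vanishing $m$-th derivatives, this coincides with $\norm{\nabla_{m}(u-\pi_{P}(u))}_{L^{p}(\gO)}$, the seminorm of the representative $v=u-\pi_{P}(u)\in\kernel\pi_{P}$. On $\kernel\pi_{P}$ the Poincar\'e--Sobolev inequality (with $\pi_{P}(v)=0$) makes this seminorm equivalent to the full $W^{m,p}$-norm, so both $q|_{\kernel\pi_{P}}$ and its inverse $\idnt-\pi_{P}$ are bounded and the identification is bicontinuous. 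The case $m=1$ is then a direct specialization: there $P^{0}=\reals$ and the previous proposition gives $\pi_{P}(u)=\tfrac{1}{\abs{\gO}}\int_{\gO}u\dee V$, so $\kernel\pi_{P}$ is precisely the space of zero-average functions and $\dot{\pi}$ subtracts the mean. The only point demanding care is this last, topological, step\textemdash ensuring the identification respects the norms rather than being merely linear\textemdash but it reduces entirely to the norm equivalence already furnished by the two propositions, so no new obstacle arises.
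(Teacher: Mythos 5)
Your argument is correct and is precisely the standard elaboration of what the paper leaves implicit (the corollary is stated without proof, as an immediate consequence of the Poincar\'e--Sobolev proposition and Proposition \ref{prop:norm-dot}): idempotency of $\pi_{P}$, the resulting direct sum $W^{m,p}(\gO)=P^{m-1}\oplus\kernel\pi_{P}$, and the norm equivalence on $\kernel\pi_{P}$ furnished by the Poincar\'e inequality. Nothing is missing and no step fails.
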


\begin{cor}
\label{cor:Nabla-dot}Consider the linear and continuous
\begin{equation}
\nabla:W^{m,p}(\gO)\tto W^{m-1,p}(\gO)^{3},\qquad u\lmt\nabla u.
\end{equation}
Then, the restriction of $\nabla$ to $\dot{W}^{m,p}(\gO)$, viewed
as $\kernel\pi_{P}$, induces the mapping 
\begin{equation}
\dot{\nabla}:\dot{W}^{m,p}(\gO)\tto\dot{W}^{m-1,p}(\gO)^{3}
\end{equation}
(not to be confused with time-derivative) defined as to make the following
diagram commutative,
\begin{equation}
\xymatrix{W^{m,p}(\gO)\ar[r]\sp(0.46){\nabla}\ar@<-0.4ex>[d]^{\dot{\pi}} & W^{m-1,p}(\gO)^{3}\ar@<-0.4ex>[d]^{\dot{\pi}}\\
\dot{W}^{m,p}(\gO)\ar[r]\sp(0.46){\dot{\nabla}}\ar@{^{(}->}@<2ex>[u]^{\incl} & \dot{W}^{m-1,p}(\gO)^{3}\ar@{^{(}->}@<2ex>[u]^{\incl}.
}
\end{equation}
Moreover, $\dot{\nabla}$ is injective, linear, and norm-preserving.
\end{cor}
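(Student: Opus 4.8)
The plan is to verify in turn that $\dot\nabla$ is well defined by the diagram, that it is linear, that it is norm-preserving, and finally that it is injective, the last being a consequence of the third.

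First I would establish well-definedness, which is the content of the commutativity of the square. Using the identification of $\dot{W}^{m,p}(\gO)$ with $\kernel\pi_{P}$ from Corollary \ref{cor:Wdot-subspace}, the map is forced to be $\dot\nabla(\dot u)=\dot\pi(\nabla u)$, where $u\in\kernel\pi_{P}$ is the representative of $\dot u$. To see that the square commutes, i.e.\ that $\dot\pi\circ\nabla=\dot\nabla\circ\dot\pi$ on all of $W^{m,p}(\gO)$, it suffices to check that $\dot\pi\circ\nabla$ annihilates $\kernel\dot\pi=P^{m-1}$. This is the only structural point: for $q\in P^{m-1}$ the gradient $\nabla q$ has components that are polynomials of degree at most $m-2$, so $\nabla q\in(P^{m-2})^{3}$; since on $W^{m-1,p}(\gO)$ the projection $\dot\pi$ kills exactly $P^{m-2}$, we get $\dot\pi(\nabla q)=0$. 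Hence $\dot\pi\circ\nabla$ descends to the quotient and coincides with $\dot\nabla\circ\dot\pi$, which is the asserted commutativity.

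Linearity is immediate, since $\dot\nabla=\dot\pi\circ\nabla\circ\incl$ is a composition of linear maps. For the norm-preserving property, I would compute both sides through Proposition \ref{prop:norm-dot}. On the source, $\norm{\dot u}_{\dot W^{m,p}(\gO)}=\norm{\nabla_{m}u}_{L^{p}(\gO)}$ for the representative $u\in\kernel\pi_{P}$. On the target, $\nabla u$ is itself a representative of $\dot\nabla(\dot u)=\dot\pi(\nabla u)$, so $\norm{\dot\nabla(\dot u)}_{\dot W^{m-1,p}(\gO)^{3}}$ equals the $L^{p}$-norm of the full collection of $(m-1)$-th order derivatives of the three components of $\nabla u$. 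The key observation is that, writing $v_{i}=\partial_{i}u$, the derivatives $\partial_{\alpha}v_{i}=\partial_{\alpha}\partial_{i}u$, with $\alpha$ an ordered $(m-1)$-index and $i\in\{1,2,3\}$, run over exactly the $m$-th order derivatives of $u$, so the two arrays of functions coincide. With the norm on $\dot W^{m-1,p}(\gO)^{3}$ taken as the sum of the component $\dot W^{m-1,p}$-norms (equivalently, the $L^{p}$-norm of the full $m$-th order derivative tensor with ordered indices), the two quantities are term-for-term equal, giving $\norm{\dot\nabla(\dot u)}=\norm{\dot u}$.

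The subtlety to watch here is purely bookkeeping: if one summed only over \emph{distinct} multi-indices with multinomial weights, the counts would not match, so the argument relies on reading $\nabla_{m}$ as the ordered-index derivative array, consistent with the componentwise convention used for the dissipation functional $\int_{\gO}\vf_{i,j}\vf_{i,j}\dee V$. Finally, injectivity follows for free: since $\norm{\cdot}_{\dot W^{m,p}(\gO)}$ is a genuine norm (Proposition \ref{prop:norm-dot}) and $\dot\nabla$ preserves it, $\dot\nabla(\dot u)=0$ forces $\norm{\dot u}_{\dot W^{m,p}(\gO)}=0$, hence $\dot u=0$. I expect the norm-matching step to be the only place demanding care; everything else is formal diagram-chasing.
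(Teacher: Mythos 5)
The paper states this corollary without proof, treating it as immediate from Corollary \ref{cor:Wdot-subspace} and Proposition \ref{prop:norm-dot}, so there is no argument of the authors' to compare against; your proof supplies the missing details and is correct. The one substantive issue you flag is real and worth recording: with the paper's literal definition $\norm{\nabla_{m}u}_{L^{p}(\gO)}=\sum_{\abs{\ga}=m}\norm{\bdry_{\ga}u}_{L^{p}(\gO)}$ (a sum over \emph{distinct} multi-indices) and the componentwise sum norm on $W^{m-1,p}(\gO)^{3}$, the identity $\sum_{i}\sum_{\abs{\gb}=m-1}\norm{\bdry_{\gb}\bdry_{i}u}=\sum_{\abs{\ga}=m}\norm{\bdry_{\ga}u}$ fails for $m\ges2$, since a given $\ga$ arises from as many pairs $(\gb,i)$ as it has nonzero entries; one then gets only equivalence of norms, $\norm{\dot{u}}\les\norm{\dot{\nabla}\dot{u}}\les3\norm{\dot{u}}$, rather than isometry. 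Your resolution\textemdash reading $\nabla_{m}$ as the ordered-index derivative array, consistent with the expressions $\int_{\gO}\ptnl_{,ij}\ptnl_{,ij}\dee V$ actually used in Sections \ref{sec:Lq-opt} and \ref{sec:Dissipation-Optimization-in-W12}\textemdash is the right one, and the distinction matters downstream because the optimization results rely on \emph{exact} norm preservation (via Hahn--Banach) to identify the optimal value $\go_{\fc}=\norm{\fc}$, not merely on $\dot{\nabla}$ being an isomorphism onto its image; for the case $m=1$ invoked in Section \ref{sec:Lq-opt} the discrepancy is vacuous. Everything else in your argument\textemdash well-definedness via $\nabla(P^{m-1})\subset(P^{m-2})^{3}$ together with $\pi_{P}$ being the identity on polynomials, linearity by composition, and injectivity as a formal consequence of norm preservation and Proposition \ref{prop:norm-dot}\textemdash is correct.
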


\section{$L^{q}$-Optimization\label{sec:Lq-opt}}

In this section, for a given bounded open subset $\gO\subset\rthree$
having a Lipschitz boundary, and for a given flux field on the boundary
and a given density rate field, we consider flux vector fields that
are optimal in the $L^{q}$-norm. Here, we include the case $q=\infty$
for which the essential supremum of the flux field is minimized.

The space of potential functions, $\Psi$, is identified in this case
with the space $\dot{W}^{1,p}(\gO)$, where $q=p/(p-1)$ is the dual
exponent to $p$. It is recalled that $\dot{W}^{1,p}(\gO)$ may be
viewed as the subspace of $W^{1,p}(\gO)$ containing functions of
zero average.

Note that as a special case of Corollary \ref{cor:Nabla-dot}, the
mapping 
\begin{equation}
\dot{\nabla}:\dot{W}^{1,p}(\gO)\tto L^{p}(\gO)^{3}
\end{equation}
is well-defined, linear, injective, and norm-preserving. It is also
noted that $\image\dot{\nabla}$ is a proper subspace of $L^{p}(\gO)$
which is not dense.

\begin{myprop}\label{prop:existence-L^p}Let $\fc\in\dot{W}^{1,p}(\gO)^{*}$
be a bounded linear functional. Then, $\fc$ may be represented in
the form
\begin{equation}
\fc=\dot{\nabla}^{*}(\vf),\qquad\vf\in L^{q}(\gO)^{3},
\end{equation}
where 
\begin{equation}
\dot{\nabla}^{*}:L^{q}(\gO)^{3}\isom L^{p}(\gO)^{*3}\tto\dot{W}^{1,p}(\gO)^{*}
\end{equation}
is the mapping dual to $\dot{\nabla}$.

\end{myprop}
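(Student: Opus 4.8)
The plan is to exploit the fact, recorded in the special case $m=1$ of Corollary~\ref{cor:Nabla-dot}, that $\dot{\nabla}$ is linear, injective, and norm-preserving, so that its image is a closed subspace and its dual is therefore surjective. Concretely, set $M:=\image\dot{\nabla}\subseteq L^{p}(\gO)^{3}$. Since $\dot{\nabla}$ is an isometry, it is an isometric isomorphism of $\dot{W}^{1,p}(\gO)$ onto $M$; in particular $M$, being the isometric image of a Banach space, is complete and hence a closed subspace of $L^{p}(\gO)^{3}$. This closedness, together with the norm identity $\norm{\dot{\nabla}\dot{u}}_{L^{p}(\gO)^{3}}=\norm{\dot{u}}_{\dot{W}^{1,p}(\gO)}$, is the structural input that drives the whole argument.

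First I would pull $\fc$ back to a functional on the image. Because $\dot{\nabla}$ is injective, every $\zeta\in M$ is $\dot{\nabla}\dot{u}$ for a unique $\dot{u}$, so the prescription $g(\dot{\nabla}\dot{u}):=\fc(\dot{u})$ defines a linear functional $g$ on $M$ without ambiguity. It is bounded: using the norm-preserving property, $\abs{g(\dot{\nabla}\dot{u})}=\abs{\fc(\dot{u})}\les\norm{\fc}\,\norm{\dot{u}}_{\dot{W}^{1,p}(\gO)}=\norm{\fc}\,\norm{\dot{\nabla}\dot{u}}_{L^{p}(\gO)^{3}}$, whence $\norm{g}_{M^{*}}\les\norm{\fc}$. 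Next I would invoke the Hahn--Banach theorem to extend $g$ to a bounded linear functional $G$ on all of $L^{p}(\gO)^{3}$ with $\norm{G}=\norm{g}$. Finally, using the identification $(L^{p}(\gO)^{3})^{*}\isom L^{q}(\gO)^{3}$ (valid in every case at hand, since $q$ finite corresponds to $1<p<\infty$ and the endpoint $q=\infty$ corresponds to $p=1$, where $(L^{1})^{*}=L^{\infty}$), I obtain a representative $\vf\in L^{q}(\gO)^{3}$ with $G(\zeta)=\int_{\gO}\vf\cdot\zeta\dee V$ for all $\zeta\in L^{p}(\gO)^{3}$.

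It then remains to verify that this $\vf$ does the job, i.e.\ that $\dot{\nabla}^{*}(\vf)=\fc$. By definition of the dual map, for every $\dot{u}\in\dot{W}^{1,p}(\gO)$ we have $\dot{\nabla}^{*}(\vf)(\dot{u})=G(\dot{\nabla}\dot{u})=g(\dot{\nabla}\dot{u})=\fc(\dot{u})$, where the middle equality holds because $\dot{\nabla}\dot{u}\in M$ and $G$ restricts to $g$ on $M$. Since $\dot{u}$ is arbitrary, $\dot{\nabla}^{*}(\vf)=\fc$, which is the asserted representation.

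The crux of the matter is not any computation but the norm-preserving property of $\dot{\nabla}$: this is precisely what makes the pulled-back functional $g$ bounded on $M$ and what makes $M$ closed, and hence what allows Hahn--Banach to deliver a representative. In abstract terms, the statement is the assertion that the adjoint of a bounded-below (here, isometric) operator is surjective, so the only point requiring care beyond this duality principle is bookkeeping of the conjugate exponents across the endpoint $q=\infty$, together with the standard Riesz identification of $(L^{p}(\gO)^{3})^{*}$ with $L^{q}(\gO)^{3}$.
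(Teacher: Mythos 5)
Your argument is correct and is essentially the paper's own proof: your functional $g$ on $\image\dot{\nabla}$ is exactly the paper's $\vf_{0}=\fc\comp\dot{\nabla}^{-1}$, and both arguments then apply Hahn--Banach together with the Riesz identification $L^{p}(\gO)^{*3}\isom L^{q}(\gO)^{3}$ and verify $\dot{\nabla}^{*}(\vf)=\fc$ by composing back with $\dot{\nabla}$. The only additions (closedness of the image, the endpoint bookkeeping for $q=\infty$) are harmless and not needed for the Hahn--Banach step.
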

\begin{proof}
Since $\dot{\nabla}$ is injective and norm-preserving the mapping
\begin{equation}
\dot{\nabla}^{-1}:\image\dot{\nabla}\subset L^{p}(\gO)^{3}\tto\dot{W}^{1,p}(\gO)
\end{equation}
is well-defined, and it has same properties. Thus, given $\fc\in\dot{W}^{1,p}(\gO)^{*}$,
\begin{equation}
\vf_{0}:=\fc\comp\dot{\nabla}^{-1}:\image\dot{\nabla}\subset L^{p}(\gO)^{3}\tto\reals
\end{equation}
is a bounded linear functional on $\image\dot{\nabla}$ as illustrated
in the following diagram. 
\begin{equation}
\xymatrix{\reals & \dot{W}^{1,p}(\gO)\ar[r]\sp(0.45){\dot{\nabla}}\ar[l]\sb(0.57){\fc} & \image\dot{\nabla}\hphantom{.}\ar@{^{(}->}[r]\sp(0.5){\incl}\ar@/_{2pc}/[ll]_{\vf_{0}=\fc\comp\dot{\nabla}^{-1}}\ar@/^{1.5pc}/[l]^{\dot{\nabla}^{-1}} & L^{p}(\gO)^{3}.}
\end{equation}

By the Hahn-Banach theorem, $\vf_{0}$ may be extended to some (non-unique)
bounded linear functional $\vf\in L^{q}(\gO)^{3}\isom L^{p}(\gO)^{*3}$.
For each $\ptnl\in\dot{W}^{1,p}(\gO)$,
\begin{equation}
\begin{split}\vf(\dot{\nabla}(\ptnl)) & =\vf_{0}(\dot{\nabla}(\ptnl)),\\
 & =\fc\comp\dot{\nabla}^{-1}(\dot{\nabla}(\ptnl)),\\
 & =\fc(\ptnl),
\end{split}
\end{equation}
and we conclude that $\fc=\vf\comp\dot{\nabla}=\dot{\nabla}^{*}(\vf)$,
as asserted.
\end{proof}

\begin{myprop}Let $\fc\in\dot{W}^{1,p}(\gO)^{*}$ be given. Then,
there is an optimal element $\vf^{\opt}\in L^{q}(\gO)^{3}$ that represents
$\fc$ in the form $\fc=\dot{\nabla}^{*}(\vf^{\opt})$. That is, 
\begin{equation}
\go_{\fc}:=\inf\{\norm{\vf}_{L^{q}(\gO)^{3}}\mid\vf\in L^{q}(\gO)^{3},\,\fc=\dot{\nabla}^{*}(\vf)\}=\norm{\vf^{\opt}}_{L^{q}(\gO)^{3}}.
\end{equation}
Moreover, 
\begin{equation}
\go_{\fc}=\norm{\vf^{\opt}}_{L^{q}(\gO)^{3}}=\norm{\fc}_{(\dot{W}^{1,p}(\gO))^{*}}.
\end{equation}

\end{myprop}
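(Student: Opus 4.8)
The plan is to recognize this as a minimal-norm extension problem and resolve it with the norm-preserving form of the Hahn--Banach theorem. Write $M:=\image\dot{\nabla}\subset L^{p}(\gO)^{3}$. Since $\dot{\nabla}^{*}(\vf)=\vf\comp\dot{\nabla}$ depends on $\vf\in L^{q}(\gO)^{3}\isom(L^{p}(\gO)^{3})^{*}$ only through its restriction $\vf\resto{M}$, the set of all representatives of a given $\fc$—those $\vf$ with $\fc=\dot{\nabla}^{*}(\vf)$—is precisely the set of functionals on $L^{p}(\gO)^{3}$ whose restriction to $M$ equals the fixed functional $\vf_{0}:=\fc\comp\dot{\nabla}^{-1}$ already constructed in the previous proof. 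Thus minimizing $\norm{\vf}_{L^{q}(\gO)^{3}}$ over representatives is the same as finding a minimal-norm extension of $\vf_{0}$ from $M$ to all of $L^{p}(\gO)^{3}$.

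First I would establish the lower bound $\norm{\fc}_{(\dot{W}^{1,p}(\gO))^{*}}\les\go_{\fc}$. For any representative $\vf$ and any $\ptnl\in\dot{W}^{1,p}(\gO)$ one has $\fc(\ptnl)=\vf(\dot{\nabla}(\ptnl))$, so, using that $\dot{\nabla}$ is norm-preserving (Corollary \ref{cor:Nabla-dot}), $\abs{\fc(\ptnl)}\les\norm{\vf}_{L^{q}}\norm{\dot{\nabla}(\ptnl)}_{L^{p}}=\norm{\vf}_{L^{q}}\norm{\ptnl}_{\dot{W}^{1,p}}$. Taking the supremum over $\ptnl$ gives $\norm{\fc}_{(\dot{W}^{1,p}(\gO))^{*}}\les\norm{\vf}_{L^{q}}$ for every representative, whence $\norm{\fc}\les\go_{\fc}$.

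For the reverse inequality and the existence of a minimizer, I would produce a single representative attaining $\norm{\fc}$. Because $\dot{\nabla}^{-1}\colon M\to\dot{W}^{1,p}(\gO)$ is norm-preserving, the functional $\vf_{0}=\fc\comp\dot{\nabla}^{-1}$ on $M$ satisfies $\norm{\vf_{0}}_{M^{*}}=\norm{\fc}_{(\dot{W}^{1,p}(\gO))^{*}}$. The norm-preserving Hahn--Banach theorem then yields an extension $\vf^{\opt}\in(L^{p}(\gO)^{3})^{*}\isom L^{q}(\gO)^{3}$ of $\vf_{0}$ with $\norm{\vf^{\opt}}_{L^{q}}=\norm{\vf_{0}}_{M^{*}}=\norm{\fc}$. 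Since $\vf^{\opt}$ extends $\vf_{0}$, for every $\ptnl$ we get $\vf^{\opt}(\dot{\nabla}(\ptnl))=\vf_{0}(\dot{\nabla}(\ptnl))=\fc(\ptnl)$, i.e.\ $\dot{\nabla}^{*}(\vf^{\opt})=\fc$, so $\vf^{\opt}$ is indeed a representative. Combining with the lower bound, $\go_{\fc}=\norm{\vf^{\opt}}_{L^{q}}=\norm{\fc}$, and the infimum is attained.

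The argument is essentially a packaging of Hahn--Banach, so I do not anticipate a serious obstacle; the single point requiring care is the duality identification $(L^{p}(\gO)^{3})^{*}\isom L^{q}(\gO)^{3}$, legitimate precisely because the formulation restricts to $1\les p<\infty$ (equivalently $1<q\les\infty$), so that the extended functional $\vf^{\opt}$ is realized as a genuine $L^{q}$ flux field rather than an abstract element of a bidual. Finally, since $M=\image\dot{\nabla}$ is not dense its annihilator is nontrivial, so the representatives of $\fc$ genuinely form an affine family; the minimal-norm representative $\vf^{\opt}$ is unique exactly when $L^{q}(\gO)^{3}$ is strictly convex, i.e.\ for $1<q<\infty$, and may fail to be unique when $q=\infty$.
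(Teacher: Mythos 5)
Your argument is correct and follows essentially the same route as the paper's proof: the lower bound $\norm{\vf}_{L^{q}(\gO)^{3}}\ges\norm{\fc}_{(\dot{W}^{1,p}(\gO))^{*}}$ for every representative comes from the fact that each representative restricts on $\image\dot{\nabla}$ to $\vf_{0}=\fc\comp\dot{\nabla}^{-1}$ (using that $\dot{\nabla}$ is injective and norm-preserving), and attainment comes from the norm-preserving Hahn--Banach extension of $\vf_{0}$. Your added observations---that representatives of $\fc$ are exactly the extensions of $\vf_{0}$, that the identification $(L^{p}(\gO)^{3})^{*}\isom L^{q}(\gO)^{3}$ is what realizes the extension as a genuine flux field, and the remark on uniqueness for $1<q<\infty$ via strict convexity---are correct refinements of what the paper leaves implicit.
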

\begin{proof}
Using the same notation as in the proof above, it is noted that in
general, for an extension $\vf\in L^{q}(\gO)^{3}$ of $\vf_{0}=\fc\comp\dot{\nabla}^{-1}$,
one has
\begin{equation}
\begin{split}\norm{\vf} & =\sup\left\{ \frac{\vf(\gf)}{\norm{\gf}_{L^{p}(\gO)^{3}}}\mid\gf\in L^{p}(\gO)^{3},\,\gf\ne0\right\} ,\\
 & \ges\sup\left\{ \frac{\vf_{0}(\dot{\nabla}(\ptnl))}{\norm{\dot{\nabla}(\ptnl)}_{L^{p}(\gO)^{3}}}\mid\ptnl\in\dot{W}^{1,p}(\gO),\,\ptnl\ne0\right\} ,\\
 & =\sup\left\{ \frac{\fc(\ptnl)}{\norm{\ptnl}_{\dot{W}^{1,p}(\gO)}}\mid\ptnl\in\dot{W}^{1,p}(\gO),\,\ptnl\ne0\right\} .
\end{split}
\end{equation}
Hence, for an element $\vf\in L^{q}(\gO)^{3}$ representing $\fc$,
\begin{equation}
\norm{\vf}_{L^{p}(\gO)^{3}}\ges\norm{\fc}_{(\dot{W}^{1,p}(\gO))^{*}}.
\end{equation}

However, by the Hahn-Banach theorem, there is a norm preserving extension,
$\vf\hb$, so that
\begin{equation}
\begin{split}\norm{\vf\hb} & _{L^{q}(\gO)^{3}}=\sup\left\{ \frac{\vf\hb(\gf)}{\norm{\gf}_{L^{p}(\gO)^{3}}}\mid\gf\in L^{p}(\gO)^{3},\,\gf\ne0\right\} ,\\
 & =\sup\left\{ \frac{\vf_{0}(\dot{\nabla}(\ptnl))}{\norm{\dot{\nabla}(\ptnl)}_{L^{p}(\gO)^{3}}}\mid\ptnl\in\dot{W}^{1,p}(\gO),\,\ptnl\ne0\right\} ,\\
 & =\sup\left\{ \frac{\fc(\ptnl)}{\norm{\ptnl}_{\dot{W}^{1,p}(\gO)}}\mid\ptnl\in\dot{W}^{1,p}(\gO),\,\ptnl\ne0\right\} 
\end{split}
\end{equation}
so that $\vf^{\opt}=\vf\hb$, is an optimal element representing $\fc$
and $\norm{\vf^{\opt}}_{L^{q}(\gO)^{3}}=\norm{\fc}_{\dot{W}^{1,p}(\gO)^{*}}$.
\end{proof}
Next, it is recalled that the trace mapping
\begin{equation}
\trace:W^{1,p}(\gO)\tto L^{p}(\bdry\gO)
\end{equation}
is a well defined bounded linear operator. Consider the restriction
of the trace mapping to $\dot{W}^{1,p}(\gO)$, thus obtaining the
trace mapping on the subspace of potential functions having zero averages,
\begin{equation}
\dot{\trace}:\dot{W}^{1,p}(\gO)\tto L^{p}(\bdry\gO).
\end{equation}
We may define therefore the bounded\, linear operator
\begin{equation}
\gd:=(\incl,\dot{\trace}):\dot{W}^{1,p}(\gO)\tto L^{p}(\gO)\times L^{p}(\bdry\gO),
\end{equation}
where 
\begin{equation}
\incl:\dot{W}^{1,p}(\gO)\tto L^{p}(\gO)
\end{equation}
 is the natural embedding.

It is concluded that every element $(\vform,\sform)\in(L^{p}(\gO)\times L^{p}(\bdry\gO))^{*}\isom L^{q}(\gO)\times L^{q}(\bdry\gO)$
induces an element $\fc=\gd^{*}(\vform,\sform)\in\dot{W}^{1,p}(\gO)^{*}$.
Summarizing, we have the following diagrams
\begin{equation}
\xymatrix{L^{p}(\gO)\times L^{p}(\bdry\gO) & \dot{W}^{1,p}(\gO)\ar@{->}[r]\sp(0.45){\dot{\nabla}}\ar@{->}[l]\sb(0.42){\gd} & L^{p}(\gO)^{3}\\
L^{q}(\gO)^{*}\times L^{q}(\bdry\gO)\ar@{->}[r]\sp(0.6){\gd^{*}} & \dot{W}^{1,p}(\gO)^{*} & \ar@{->}[l]\sb(0.5){\dot{\nabla}^{*}}L^{q}(\gO)^{*}
}
\label{eq:diagram-1-1}
\end{equation}

\begin{myprop}Let $(\vform,\sform)\in(L^{p}(\gO)\times L^{p}(\bdry\gO))^{*}\isom L^{q}(\gO)\times L^{q}(\bdry\gO)$.
Then, there is an optimal element $\vf^{\opt}\in L^{q}(\gO)^{3}$
that satisfies the weak balance equation
\begin{equation}
\gd^{*}(\vform,\sform)=\dot{\nabla}^{*}(\vf).
\end{equation}
That is, 
\begin{equation}
\go_{(\vform,\sform)}:=\inf\{\norm{\vf}_{L^{q}(\gO)^{3}}\mid\vf\in L^{q}(\gO)^{3},\,\gd^{*}(\vform,\sform)=\dot{\nabla}^{*}(\vf)\}=\norm{\vf^{\opt}}_{L^{q}(\gO)^{3}}.
\end{equation}
Moreover, 
\begin{multline}
\go_{(\vform,\sform)}=\norm{\vf^{\opt}}_{L^{q}(\gO)^{3}}=\norm{\gd^{*}(\vform,\sform)}_{\dot{W}^{1,p}(\gO)^{*}}\\
=\sup\left\{ \frac{\int_{\gO}\vform\ptnl\dee{V+\int_{\bdry\gO}\sform\ptnl\dee A}}{\left[\int_{\gO}\abs{\nabla\ptnl}^{p}\dee V\right]^{1/p}}\mid\ptnl\in\dot{W}^{1,p}(\gO)\right\} .
\end{multline}

\end{myprop}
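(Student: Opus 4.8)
The plan is to recognize this statement as nothing more than the specialization of the two preceding propositions to the single functional $\fc:=\gd^{*}(\vform,\sform)$, so that the substantive analytic work (the Hahn--Banach extension argument) has already been carried out. What remains is therefore twofold: to verify that this $\fc$ is an admissible, i.e.\ bounded linear, functional on $\dot{W}^{1,p}(\gO)$, and then to unwind the dual norm $\norm{\fc}_{\dot{W}^{1,p}(\gO)^{*}}$ into the explicit supremum displayed in the statement.

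First I would check that $\fc=\gd^{*}(\vform,\sform)$ indeed lies in $\dot{W}^{1,p}(\gO)^{*}$. The operator $\gd=(\incl,\dot{\trace})$ is bounded and linear from $\dot{W}^{1,p}(\gO)$ into $L^{p}(\gO)\times L^{p}(\bdry\gO)$: the embedding $\incl$ is bounded by the Poincar\'e inequality of the first proposition (for a zero-average representative $\pi_{P}(u)=0$, whence $\norm{u}_{L^{p}(\gO)}\les C\norm{\nabla u}_{L^{p}(\gO)^{3}}$), while $\dot{\trace}$ is the restriction to $\dot{W}^{1,p}(\gO)$ of the bounded trace operator recalled in Section \ref{sec:Analytic-Preliminaries}. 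Consequently its dual $\gd^{*}$ maps $(L^{p}(\gO)\times L^{p}(\bdry\gO))^{*}\isom L^{q}(\gO)\times L^{q}(\bdry\gO)$ continuously into $\dot{W}^{1,p}(\gO)^{*}$, so $\fc$ is a well-defined bounded linear functional. Because $\fc$ is built from $\gd$, which already acts on the quotient space, it is automatically insensitive to the choice of representative.

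Having done this, existence of $\vf^{\opt}$ together with the first chain of equalities is immediate: Proposition \ref{prop:existence-L^p} yields a representation $\fc=\dot{\nabla}^{*}(\vf)$ with $\vf\in L^{q}(\gO)^{3}$, and the optimality proposition established immediately above produces an optimal $\vf^{\opt}$ with
\[
\go_{(\vform,\sform)}=\norm{\vf^{\opt}}_{L^{q}(\gO)^{3}}=\norm{\fc}_{\dot{W}^{1,p}(\gO)^{*}}=\norm{\gd^{*}(\vform,\sform)}_{\dot{W}^{1,p}(\gO)^{*}}.
\]
For the final equality I would write the dual norm as
\[
\norm{\gd^{*}(\vform,\sform)}_{\dot{W}^{1,p}(\gO)^{*}}=\sup\left\{\frac{\gd^{*}(\vform,\sform)(\ptnl)}{\norm{\ptnl}_{\dot{W}^{1,p}(\gO)}}\mid\ptnl\in\dot{W}^{1,p}(\gO),\,\ptnl\ne0\right\},
\]
and unfold the numerator through the definitions of $\gd^{*}$ and $\gd$,
\[
\gd^{*}(\vform,\sform)(\ptnl)=(\vform,\sform)(\gd(\ptnl))=(\vform,\sform)(\ptnl,\dot{\trace}(\ptnl))=\int_{\gO}\vform\ptnl\dee V+\int_{\bdry\gO}\sform\ptnl\dee A,
\]
using the identification of $(\vform,\sform)$ with an element of $L^{q}(\gO)\times L^{q}(\bdry\gO)$. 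The denominator is, by Proposition \ref{prop:norm-dot} (in the form for which $\dot{\nabla}$ is norm-preserving, cf.\ Corollary \ref{cor:Nabla-dot}), equal to $[\int_{\gO}\abs{\nabla\ptnl}^{p}\dee V]^{1/p}$. Substituting both expressions gives exactly the asserted supremum formula.

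There is no real conceptual obstacle here, only bookkeeping: the point that must be handled with care is that the norm convention on $\dot{W}^{1,p}(\gO)$ entering the final formula is the gradient norm $[\int_{\gO}\abs{\nabla\ptnl}^{p}\dee V]^{1/p}$ — the one that makes $\dot{\nabla}$ an isometry onto its image — rather than the equivalent sum-of-components norm of Section \ref{sec:Analytic-Preliminaries}, and that every pairing is evaluated on the fixed zero-average representative of $\ptnl$, so that the integral expression is unambiguous and the chain of equalities is literal rather than merely up to equivalence constants.
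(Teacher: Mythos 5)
Your proposal is correct and follows exactly the route the paper intends: the paper's own proof is the single sentence ``one simply combines the foregoing observations and uses the definitions of the norms in the various spaces,'' and your argument is precisely that combination, spelled out (boundedness of $\gd$ hence of $\fc=\gd^{*}(\vform,\sform)$, then the two preceding propositions, then unwinding the dual norm). Your closing remark about the gradient norm versus the sum-of-components norm is a fair point of care that the paper glosses over, but it does not change the substance.
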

\begin{proof}
One simply combines the foregoing observations and uses the definitions
of the norms in the various spaces.
\end{proof}
\begin{rem}
Note that since smooth functions are dense in the Sobolev spaces,
it is sufficient to take the supremum above over smooth functions
having zero averages.
\end{rem}

\section{Dissipation Optimization in $W^{1,2}(\protect\gO)$\label{sec:Dissipation-Optimization-in-W12}}

An important criterion for the optimization of a flux field is maximizing
the efficiency of the flow by minimizing the dissipation of energy.
The rate of dissipation of energy is taken for simplicity as (see
\cite[p. 580]{Lamb32})
\[
\int_{\gO}\abs{\nabla\vf}^{2}\dee V=\int_{\gO}\nabla\vf_{i}\cdot\nabla\vf_{i}\dee{V.}
\]

In terms of the notation of Section \ref{sec:Analytic-Preliminaries},
we wish to minimize 
\begin{equation}
\norm{\nabla\vf}_{L^{2}(\gO)^{3}}=\norm{\dot{\vf}}_{\dot{W}^{1,2}(\gO)^{3}}.
\end{equation}
The following construction is analogous to that of the foregoing section
with some necessary adaptations.

\subsection{The space of potential functions}

The space of potential functions is taken as $\dot{W}^{2,2}(\gO)$,
and it is viewed as a subspace of $W^{2,2}(\gO)$. It follows from
Proposition \ref{prop:norm-dot} that 
\begin{equation}
\norm{\ptnl}_{\dot{W}^{2,2}(\gO)}=\norm{\nabla_{2}u}_{L^{2}(\gO)}=\left[\int_{\gO}u_{,ij}u_{,ij}\dee V\right]^{1/2},
\end{equation}
where $u$ is any representative of the equivalence class $\ptnl$,
is a norm on $\dot{W}^{2,2}(\gO)$. In fact, in what follows, we will
write $\ptnl_{,ij}$ for $u_{,ij}$ as the expression is independent
of the particular element representing $\ptnl$.

Evidently, $\dot{W}^{2,2}(\gO)$ is a Hilbert space (see the proof
of the completeness in \cite{Mazja}) with the inner product
\begin{equation}
\innp{\ptnl}{\ptnl'}=\int_{\gO}\ptnl_{,ij}\ptnl'_{,ij}\dee V.\label{eq:InnerP-W22}
\end{equation}

It is recalled that by the Sobolev embedding theorem, as $mp=2\cdot2>3=\dim\rthree$,
(see \cite[p. 85]{Adams}), there is an embedding 
\begin{equation}
\eps:W^{2,2}(\gO)\tto C^{0}(\ol{\gO}).\label{eq:embedding-Sobolev-2}
\end{equation}
Thus, every function in $W^{2,2}(\gO$) is continuous and may be extended
continuously to the Lipschitz boundary, $\bdry\gO$. This applies,
in particular, to the subspace $\dot{W}^{2,2}(\gO)$. Thus, we have
a bounded linear operator 
\begin{equation}
\gd=(\idnt,\dot{\trace}):\dot{W}^{2,2}(\gO)\tto C^{0}(\gO)\times C^{0}(\bdry\gO),\qquad\ptnl\lmt(\ptnl,\ptnl\resto{\bdry\gO}).
\end{equation}

A pair $(\wh{\vform},\wh{\sform})\in C^{0}(\gO)^{*}\times C^{0}(\bdry\gO)^{*}$,
will therefore induce an element $\fc\in\dot{W}^{2,2}(\gO)^{*}$ by
\begin{equation}
\fc=\gd^{*}(\wh{\vform},\wh{\sform}).
\end{equation}
It is noted that as linear functionals of spaces of $C^{0}$-functions,
$\wh{\vform}$ and $\wh{\sform}$ are represented by measures $\vform$
and $\sform$ on $\gO$ and $\bdrr$, respectively. Thus,
\begin{equation}
\fc(\ptnl)=\int_{\gO}\ptnl\dee{\vform}+\int_{\bdry\gO}\ptnl\dee{\sform}.\label{eq:func-vol-sur}
\end{equation}

\subsection{Representation and optimization}

Next, Corollary \ref{cor:Nabla-dot} implies that we have a bounded,
linear operator
\begin{equation}
\dot{\nabla}:\dot{W}^{2,2}(\gO)\tto\dot{W}^{1,2}(\gO)^{3},
\end{equation}
which is injective and norm-preserving. Hence, in analogy with the
the construction for $W^{1,q}(\gO)$-optimization we have the following.

\begin{myprop}\label{prop:Representating_F}Let $\fc\in\dot{W}^{2,2}(\gO)^{*}$
be a bounded linear functional. Then, $\fc$ may be represented in
the form
\begin{equation}
\fc=\dot{\nabla}^{*}(\vf),\qquad\vf\in\dot{W}^{1,2}(\gO)^{3*},
\end{equation}
where 
\begin{equation}
\dot{\nabla}^{*}:\dot{W}^{1,2}(\gO)^{3*}\tto\dot{W}^{2,2}(\gO)^{*}
\end{equation}
is the mapping dual to $\dot{\nabla}$.

\end{myprop}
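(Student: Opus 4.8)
The plan is to reproduce, essentially verbatim, the argument of Proposition~\ref{prop:existence-L^p}, with the target space $L^p(\gO)^3$ there replaced by $\dot{W}^{1,2}(\gO)^3$ here. The only inputs are the properties of $\dot\nabla$ recorded in Corollary~\ref{cor:Nabla-dot} together with the Hahn--Banach theorem. First I would use that
\[
\dot\nabla:\dot{W}^{2,2}(\gO)\tto\dot{W}^{1,2}(\gO)^3
\]
is linear, injective, and norm-preserving, so that its corestriction to its image is an isometric isomorphism onto the subspace $\image\dot\nabla\subset\dot{W}^{1,2}(\gO)^3$. Hence the inverse
\[
\dot\nabla^{-1}:\image\dot\nabla\tto\dot{W}^{2,2}(\gO)
\]
is a well-defined bounded (indeed norm-preserving) linear map.

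Given $\fc\in\dot{W}^{2,2}(\gO)^*$, I would then set $\vf_0:=\fc\comp\dot\nabla^{-1}:\image\dot\nabla\tto\reals$, which is a bounded linear functional on the subspace $\image\dot\nabla$, being the composition of the bounded map $\dot\nabla^{-1}$ with the bounded functional $\fc$. By the Hahn--Banach theorem, $\vf_0$ extends to a (non-unique) bounded linear functional $\vf\in(\dot{W}^{1,2}(\gO)^3)^*=\dot{W}^{1,2}(\gO)^{3*}$ on all of $\dot{W}^{1,2}(\gO)^3$. It then remains only to verify the representation: for every $\ptnl\in\dot{W}^{2,2}(\gO)$,
\[
\vf(\dot\nabla(\ptnl))=\vf_0(\dot\nabla(\ptnl))=\fc\comp\dot\nabla^{-1}(\dot\nabla(\ptnl))=\fc(\ptnl),
\]
so that $\fc=\vf\comp\dot\nabla=\dot\nabla^*(\vf)$, as asserted. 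This is precisely the diagram-chase of Proposition~\ref{prop:existence-L^p}.

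I do not expect a genuine obstacle, since Hahn--Banach applies in any normed space and every property of $\dot\nabla$ needed here is already in hand. The one point deserving care is the boundedness of $\vf_0$ on $\image\dot\nabla$, which rests on $\dot\nabla^{-1}$ being bounded---that is, on the \emph{norm-preservation} of $\dot\nabla$ and not merely its injectivity. I would also remark that, since $\dot{W}^{1,2}(\gO)^3$ is itself a Hilbert space (with inner product $\int_\gO v_{i,j}v'_{i,j}\dee V$) and $\image\dot\nabla$ is closed---being the isometric image of the complete space $\dot{W}^{2,2}(\gO)$---one could bypass Hahn--Banach and instead extend $\vf_0$ canonically through the orthogonal projection onto $\image\dot\nabla$ together with the Riesz representation theorem, which would even single out a distinguished representative $\vf$. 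The Hahn--Banach route is nevertheless preferable here, as it keeps the argument strictly parallel to the $L^q$ case.
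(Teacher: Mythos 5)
Your proof is correct and follows essentially the same route as the paper's: restrict $\fc$ through $\dot{\nabla}^{-1}$ to a bounded functional $\vf_{0}$ on $\image\dot{\nabla}$, extend by Hahn--Banach, and verify $\fc=\vf\comp\dot{\nabla}=\dot{\nabla}^{*}(\vf)$. Your closing remark about using the Hilbert structure of $\dot{W}^{1,2}(\gO)^{3}$ and orthogonal projection to single out a distinguished extension is precisely what the paper exploits later to obtain the \emph{unique} dissipation-minimizing representative, so it is a good observation but not needed here.
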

\begin{proof}
In analogy with Proposition \ref{prop:existence-L^p} we have the
following diagram
\begin{equation}
\xymatrix{\reals & \dot{W}^{2,2}(\gO)\ar[r]\sp(0.45){\dot{\nabla}}\ar[l]\sb(0.57){\fc} & \image\dot{\nabla}\hphantom{.}\ar@{^{(}->}[r]\sp(0.5){\incl}\ar@/_{2pc}/[ll]_{\vf_{0}=\fc\comp\dot{\nabla}^{-1}}\ar@/^{1.5pc}/[l]^{\dot{\nabla}^{-1}} & \dot{W}^{1,2}(\gO)^{3}.}
\label{eq:Representing_F}
\end{equation}

By the Hahn-Banach theorem, $\vf_{0}$ may be extended a bounded linear
functional $\vf\in\dot{W}^{1,p}(\gO)^{*3}$. For each $\ptnl\in\dot{W}^{2,2}(\gO)$,
\begin{equation}
\begin{split}\vf(\dot{\nabla}(\ptnl)) & =\vf_{0}(\dot{\nabla}(\ptnl)),\\
 & =\fc\comp\dot{\nabla}^{-1}(\dot{\nabla}(\ptnl)),\\
 & =\fc(\ptnl),
\end{split}
\end{equation}
and we conclude that $\fc=\vf\comp\dot{\nabla}=\dot{\nabla}^{*}(\vf)$,
as asserted.
\end{proof}

\begin{myprop}\label{prop:optimality}Let $\fc\in\dot{W}^{2,2}(\gO)^{*}$
be given. Then, there is an optimal element $\vf^{\opt}\in\dot{W}^{1,2}(\gO)^{3*}$
that represents $\fc$ in the form $\fc=\dot{\nabla}^{*}(\vf^{\opt})$.
That is, 
\begin{equation}
\go_{\fc}:=\inf\{\norm{\vf}_{\dot{W}^{1,2}(\gO)^{3*}}\mid\vf\in\dot{W}^{1,2}(\gO)^{3*},\,\fc=\dot{\nabla}^{*}(\vf)\}=\norm{\vf^{\opt}}_{\dot{W}^{1,2}(\gO)^{3*}}.
\end{equation}
Moreover, 
\begin{equation}
\go_{\fc}=\norm{\vf^{\opt}}_{\dot{W}^{1,2}(\gO)^{3*}}=\norm{\fc}_{(\dot{W}^{2,2}(\gO))^{*}}.
\end{equation}

Specifically, for the case $\fc=\gd^{*}(\wh{\vform},\wh{\sform})$
considered here,
\begin{equation}
\go_{\fc}=\sup\left\{ \frac{\int_{\gO}\ptnl\dee{\vform}+\int_{\bdrr}\ptnl\dee{\sform}}{\left[\int_{\gO}\ptnl_{,ij}\ptnl_{,ij}\dee V\right]^{1/2}}\mid\ptnl\in\dot{W}^{2.2}(\gO)\right\} .
\end{equation}

\end{myprop}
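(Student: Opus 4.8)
The plan is to repeat, with the obvious substitutions, the argument already used for the $L^{q}$-optimality result in Section~\ref{sec:Lq-opt}, the only structural input being that $\dot{\nabla}$ is injective and norm-preserving (Corollary~\ref{cor:Nabla-dot}). Because $\dot{\nabla}$ is an isometry onto its image, the inverse $\dot{\nabla}^{-1}\from\image\dot{\nabla}\tto\dot{W}^{2,2}(\gO)$ is a well-defined isometry, so that, given $\fc\in\dot{W}^{2,2}(\gO)^{*}$, the composition $\vf_{0}:=\fc\comp\dot{\nabla}^{-1}$ is a bounded linear functional on the subspace $\image\dot{\nabla}\subset\dot{W}^{1,2}(\gO)^{3}$ with $\norm{\vf_{0}}_{(\image\dot{\nabla})^{*}}=\norm{\fc}_{\dot{W}^{2,2}(\gO)^{*}}$. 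This is exactly the situation depicted in diagram~(\ref{eq:Representing_F}).

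First I would establish the lower bound $\norm{\vf}_{\dot{W}^{1,2}(\gO)^{3*}}\ges\norm{\fc}_{\dot{W}^{2,2}(\gO)^{*}}$ valid for \emph{every} representative $\vf$ of $\fc$. To this end I restrict the supremum defining the dual norm of $\vf$ to the test elements of the special form $\gf=\dot{\nabla}(\ptnl)$; using $\vf(\dot{\nabla}(\ptnl))=\dot{\nabla}^{*}(\vf)(\ptnl)=\fc(\ptnl)$ together with the norm-preservation $\norm{\dot{\nabla}(\ptnl)}_{\dot{W}^{1,2}(\gO)^{3}}=\norm{\ptnl}_{\dot{W}^{2,2}(\gO)}$, the restricted supremum equals precisely $\norm{\fc}_{\dot{W}^{2,2}(\gO)^{*}}$, giving the inequality.

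For the matching upper bound and the existence of a minimizer, I would invoke the Hahn-Banach theorem to extend $\vf_{0}$ from $\image\dot{\nabla}$ to a functional $\vf\hb\in\dot{W}^{1,2}(\gO)^{3*}$ with $\norm{\vf\hb}_{\dot{W}^{1,2}(\gO)^{3*}}=\norm{\vf_{0}}_{(\image\dot{\nabla})^{*}}=\norm{\fc}_{\dot{W}^{2,2}(\gO)^{*}}$. Since $\vf\hb$ agrees with $\vf_{0}$ on $\image\dot{\nabla}$, one has $\vf\hb(\dot{\nabla}(\ptnl))=\fc(\ptnl)$ for all $\ptnl$, i.e.\ $\dot{\nabla}^{*}(\vf\hb)=\fc$; hence $\vf\hb$ represents $\fc$ and, by the lower bound, attains the infimum. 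Setting $\vf^{\opt}:=\vf\hb$ then yields $\go_{\fc}=\norm{\vf^{\opt}}_{\dot{W}^{1,2}(\gO)^{3*}}=\norm{\fc}_{\dot{W}^{2,2}(\gO)^{*}}$.

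Finally, for the explicit formula I would substitute the concrete representation $\fc(\ptnl)=\int_{\gO}\ptnl\dee{\vform}+\int_{\bdrr}\ptnl\dee{\sform}$ from~(\ref{eq:func-vol-sur}) and the norm $\norm{\ptnl}_{\dot{W}^{2,2}(\gO)}=[\int_{\gO}\ptnl_{,ij}\ptnl_{,ij}\dee V]^{1/2}$ into the definition $\norm{\fc}_{\dot{W}^{2,2}(\gO)^{*}}=\sup_{\ptnl\ne0}\fc(\ptnl)/\norm{\ptnl}_{\dot{W}^{2,2}(\gO)}$, which reproduces the asserted supremum over $\dot{W}^{2,2}(\gO)$. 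The argument is almost entirely formal; the only point that must not be glossed over is that the norm-preservation of $\dot{\nabla}$ is what forces the lower and upper bounds to coincide, so that the Hahn-Banach extension is genuinely optimal rather than merely norm-nonincreasing. I would also remark that, since $\dot{W}^{1,2}(\gO)^{3}$ is a Hilbert space, this minimal-norm extension is in fact unique (it is the Riesz representative of $\vf_{0}$ supported on $\cl{\image\dot{\nabla}}$), which accounts for the uniqueness announced in the introduction.
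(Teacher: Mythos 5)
Your proposal is correct and follows essentially the same route as the paper: the lower bound for any representative is obtained by restricting the dual-norm supremum to elements of the form $\dot{\nabla}(\ptnl)$ and using the norm preservation of $\dot{\nabla}$, and the bound is attained by a Hahn--Banach norm-preserving extension of $\vf_{0}=\fc\comp\dot{\nabla}^{-1}$, exactly as in the paper's argument (which defers the detailed computation to the analogous $L^{q}$ case). Your closing remark on uniqueness via the Hilbert-space structure anticipates, correctly, what the paper establishes separately in the subsequent proposition.
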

\begin{proof}
Again, in analogy with the previous section, for an element $\vf\in\dot{W}^{1,2}(\gO)^{3*}$
representing $\fc$, 
\begin{equation}
\norm{\vf}_{\dot{W}^{1,2}(\gO)^{3*}}\ges\norm{\fc}_{(\dot{W}^{2,2}(\gO))^{*}}.
\end{equation}

However, by the Hahn-Banach theorem, there is a norm preserving extension,
$\vf\hb$, so that
\begin{equation}
\begin{split}\norm{\vf\hb} & _{\dot{W}^{1,2}(\gO)^{3*}}=\norm{\fc}_{\dot{W}^{2,2}(\gO)^{*}}.\end{split}
\end{equation}
Hence, $\vf^{\opt}=\vf\hb$, is an optimal element representing $\fc$
and $\norm{\vf^{\opt}}_{\dot{W}^{1,2}(\gO)^{3*}}=\norm{\fc}_{(\dot{W}^{2,2}(\gO))^{*}}$.
\end{proof}

\subsection{Implication of the Hilbert space structure}

As noted above, under the settings of this section, where potential
functions are elements of $\dot{W}^{2,2}(\gO)$, fluxes are elements
$\vf\in\dot{W}^{1,2}(\gO)^{3*}$. 

Since $\dot{W}^{2,2}(\gO)$ is a Hilbert space under the inner product
(\ref{eq:InnerP-W22}), by the Riesz representation theorem for Hilbert
spaces, for each element $\fc\in\dot{W}^{2,2}(\gO)^{*}$, there is
an element $\ol F\in\dot{W}^{2,2}(\gO)$ such that 
\begin{equation}
\fc(\ptnl)=\inner{\ol F}{\ptnl}=\int_{\gO}\ol{\fc}_{,ij}\ptnl_{,ij}\dee V,
\end{equation}
and
\begin{equation}
\norm{\fc}_{\dot{W}^{2,2}(\gO)^{*}}^{2}=\int_{\gO}\ol{\fc}_{,ij}\ol{\fc}_{,ij}\dee V.
\end{equation}

In the sequel, we will adopt the scheme of notation by which for an
element $h^{*}$ in the dual space, $H^{*}$, of a Hilbert space,
$H$, $\ol{h^{*}}$ will denote the element of $H$ representing $h^{*}$. 

The space $\dot{W}^{1,2}(\gO)^{3}$ is a Hilbert space with the inner
product
\begin{equation}
\innp{\gf}{\gf'}=\int_{\gO}\gf_{i,j}\gf'_{i,j}\dee V=\int_{\gO}\nabla\gf_{i}\cdot\nabla\gf'_{i}\dee V.
\end{equation}
Thus, any element $\vf\in\dot{W}^{1,2}(\gO)^{3*}$ may be represented
by an element of $\ol{\vf}\in\dot{W}^{1,2}(\gO)^{3}$, in the form
\[
\vf(\gf)=\inner{\ol{\vf}_{i}}{\gf_{i}}=\int_{\gO}\ol{\vf}_{i,j}\gf_{i,j}\dee V.
\]

Moreover, 
\begin{equation}
\norm{\vf}_{\dot{W}^{1,2}(\gO)^{3*}}^{2}=\norm{\ol{\vf}}_{\dot{W}^{1,2}(\gO)^{3}}^{2}=\inner{\ol{\vf}}{\ol{\vf}}=\int_{\gO}\ol{\vf}_{i,j}\ol{\vf}_{i,j}\dee V.
\end{equation}
Thus, minimizing the dual norm of $\vf$ is equivalent to minimizing
the dissipation, as required. 

Using these structures, the mapping $\dot{\nabla}^{*}:\dot{W}^{1,2}(\gO)^{3*}\tto\dot{W}^{2,2}(\gO)^{*}$
is represented by the mapping
\begin{equation}
\dot{\nabla}\trps:\dot{W}^{1,2}(\gO)^{3}\tto\dot{W}^{2,2}(\gO)
\end{equation}
satisfying
\begin{equation}
\ol{\dot{\nabla}^{*}(\vf)}=\dot{\nabla}\trps(\ol{\vf}).
\end{equation}
(We recall that the bar over $\dot{\nabla}^{*}(\vf)$ indicates the
element in the Hilbert space that represent the bounded linear functional.)
It follows that
\begin{equation}
\bigl\langle\dot{\nabla}\trps(\ol{\vf})\bigr\rangle=\inner{\ol{\vf}}{\dot{\nabla}(\psi)}.
\end{equation}

We can therefore replace the diagram
\begin{equation}
\xymatrix{C^{0}(\gO)\times C^{0}(\bdry\gO) & \dot{W}^{2,2}(\gO)\ar@{->}[r]\sp(0.45){\dot{\nabla}}\ar@{->}[l]\sb(0.42){\gd} & \dot{W}^{1,2}(\gO)^{3}\\
C^{0}(\gO)^{*}\times C^{0}(\bdry\gO)^{*}\ar@{->}[r]\sp(0.6){\gd^{*}} & \dot{W}^{2,2}(\gO)^{*} & \ar@{->}[l]\sb(0.5){\dot{\nabla}^{*}}\dot{W}^{1,2}(\gO)^{3*}
}
\end{equation}
by the diagram
\begin{equation}
\xymatrix{C^{0}(\gO)\times C^{0}(\bdry\gO) & \dot{W}^{2,2}(\gO)\ar@{->}[r]\sp(0.45){\dot{\nabla}}\ar@{->}[l]\sb(0.42){\gd} & \dot{W}^{1,2}(\gO)^{3}\\
C^{0}(\gO)^{*}\times C^{0}(\bdry\gO)^{*}\ar@{->}[r]\sp(0.6){\gd\trps} & \dot{W}^{2,2}(\gO) & \ar@{->}[l]\sb(0.5){\dot{\nabla}\trps}\dot{W}^{1,2}(\gO)^{3},
}
\end{equation}
where the mapping $\gd\trps:C^{0}(\gO)^{*}\times C^{0}(\bdry\gO)^{*}\tto\dot{W}^{2,2}(\gO)$,
satisfies 
\begin{equation}
\gd\trps(\wh{\vform},\wh{\sform})=\ol{\gd^{*}(\wh{\vform},\wh{\sform})}.
\end{equation}
Let $\fc=\gd^{*}(\wh{\vform},\wh{\sform})$, then,
\begin{equation}
\inner{\ol{\fc}}{\ptnl}=\int_{\gO}\ol{\fc}_{,ij}\ptnl_{ij}\dee V=(\wh{\vform},\wh{\sform})(\gd(\ptnl))=\int_{\gO}\ptnl\dee{\vform}+\int_{\bdry\gO}\ptnl\dee{\sform.}
\end{equation}

The weak balance equation may be reformulated now as
\begin{equation}
\gd\trps(\wh{\vform},\wh{\sform})=\dot{\nabla}\trps(\ol{\vf})
\end{equation}

\begin{myprop}Let $(\vform,\sform)$ be a pair of (Radon) measures
on $\gO$ and $\bdry\gO$, respectively. 
\begin{enumerate}
\item There is a unique dissipation minimizing element $\ol{\vf}^{\opt}\in\dot{W}^{1,2}(\gO)^{3}$
that satisfies the weak balance equation
\begin{equation}
\gd^{*}(\wh{\vform},\wh{\sform})=\dot{\nabla}^{*}(\vf)\qquad\text{or alternatively,}\qquad\gd\trps(\wh{\vform},\wh{\sform})=\dot{\nabla}\trps(\ol{\vf}).
\end{equation}
That is, 
\begin{equation}
\go_{(\vform,\sform)}:=\inf\left\{ \norm{\ol{\vf}}_{\dot{W}^{1,2}(\gO)^{3}}^{2}=\int_{\gO}\ol{\vf}_{i,j}\ol{\vf}_{i,j}\dee V\right\} =\norm{\ol{\vf}^{\opt}}_{\dot{W}^{1,2}(\gO)^{3}}
\end{equation}
where the infimum is taken over all $\vf\in\dot{W}^{1,2}(\gO)^{3}$,
$\gd^{*}(\vform,\sform)=\dot{\nabla}^{*}(\vf)$. Moreover, 
\begin{equation}
\go_{(\vform,\sform)}=\norm{\vf^{\opt}}_{\dot{W}^{1,2}(\gO)^{3}}=\norm{\gd^{*}(\wh{\vform},\wh{\sform})}_{\dot{W}^{2,2}(\gO)^{*}},
\end{equation}
is given by
\begin{equation}
\go_{(\vform,\sform)}=\sup\left\{ \frac{\int_{\gO}\ptnl\dee{\vform+\int_{\bdry\gO}\ptnl\dee{\sform}}}{\left[\int_{\gO}\ptnl_{,ij}\ptnl_{,ij}\dee V\right]^{1/2}}\mid\ptnl\in\dot{W}^{2,2}(\gO)\right\} .
\end{equation}
\end{enumerate}
\end{myprop}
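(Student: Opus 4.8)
The plan is to lean on Proposition~\ref{prop:optimality}, which already furnishes, for $\fc=\gd^{*}(\wh{\vform},\wh{\sform})$ with $(\wh{\vform},\wh{\sform})$ a pair of (Radon) measures represented on $C^{0}(\gO)^{*}\times C^{0}(\bdry\gO)^{*}$, an optimal representing element together with the equalities $\go_{(\vform,\sform)}=\norm{\fc}_{\dot{W}^{2,2}(\gO)^{*}}$ and the supremum formula. The genuinely new assertion is \emph{uniqueness} of the dissipation minimizer, and this is exactly where the Hilbert space structure set up in this subsection --- absent in the $L^{q}$-setting of Section~\ref{sec:Lq-opt}, where Hahn--Banach extensions need not be unique --- does the work. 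I would therefore devote the proof to uniqueness, recovering the explicit minimizer as a by-product.

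First I would record that, by Corollary~\ref{cor:Nabla-dot}, $\dot{\nabla}:\dot{W}^{2,2}(\gO)\tto\dot{W}^{1,2}(\gO)^{3}$ is a norm-preserving linear injection of Hilbert spaces; by polarization it preserves inner products, and since $\dot{W}^{2,2}(\gO)$ is complete its image $M:=\image\dot{\nabla}$ is a \emph{closed} subspace of $\dot{W}^{1,2}(\gO)^{3}$. Passing to Riesz representatives, an element $\ol{\vf}\in\dot{W}^{1,2}(\gO)^{3}$ represents $\fc$ --- i.e.\ satisfies $\dot{\nabla}\trps(\ol{\vf})=\gd\trps(\wh{\vform},\wh{\sform})$ --- precisely when $\inner{\ol{\vf}}{\dot{\nabla}(\ptnl)}=\fc(\ptnl)$ for every $\ptnl\in\dot{W}^{2,2}(\gO)$. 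Since this condition involves $\ol{\vf}$ only through $\inner{\ol{\vf}}{\cdot}$ on $M$, two representatives differ by an element of $M^{\perp}$; thus the admissible set is an affine coset $\ol{\vf}^{\opt}+M^{\perp}$, on which, by the Pythagorean identity $\norm{\ol{\vf}}^{2}=\norm{P_{M}\ol{\vf}}^{2}+\norm{\ol{\vf}-P_{M}\ol{\vf}}^{2}$, the dissipation $\int_{\gO}\ol{\vf}_{i,j}\ol{\vf}_{i,j}\dee V=\norm{\ol{\vf}}^{2}$ is minimized if and only if the $M^{\perp}$-component vanishes. This gives a unique minimizer: the element of the coset lying in $M$.

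To exhibit it, I would verify that $\ol{\vf}^{\opt}=\dot{\nabla}(\ol{\fc})$ works, where $\ol{\fc}\in\dot{W}^{2,2}(\gO)$ is the Riesz representative of $\fc$. Indeed, inner-product preservation gives $\inner{\dot{\nabla}(\ol{\fc})}{\dot{\nabla}(\ptnl)}=\inner{\ol{\fc}}{\ptnl}=\fc(\ptnl)$, so $\dot{\nabla}(\ol{\fc})$ represents $\fc$ and lies in $M$; hence it is the unique minimizer, and norm-preservation yields once more $\go_{(\vform,\sform)}=\norm{\ol{\vf}^{\opt}}_{\dot{W}^{1,2}(\gO)^{3}}=\norm{\ol{\fc}}_{\dot{W}^{2,2}(\gO)}=\norm{\fc}_{\dot{W}^{2,2}(\gO)^{*}}$, consistent with Proposition~\ref{prop:optimality}.

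The main obstacle is concentrated in the middle paragraph: one must be sure that $M=\image\dot{\nabla}$ is closed --- which rests on $\dot{\nabla}$ being norm-preserving combined with completeness of $\dot{W}^{2,2}(\gO)$ --- so that the orthogonal projection $P_{M}$ and the decomposition $\dot{W}^{1,2}(\gO)^{3}=M\oplus M^{\perp}$ are legitimate, and one must check that the balance constraint pins down $P_{M}\ol{\vf}$ while leaving the $M^{\perp}$-component entirely free. Once this orthogonality picture is in place, uniqueness and the value are immediate, and the supremum representation is inherited verbatim from Proposition~\ref{prop:optimality} via (\ref{eq:func-vol-sur}).
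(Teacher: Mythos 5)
Your proposal is correct and follows essentially the same route as the paper: both arguments rest on the observation that $\image\dot{\nabla}$ is a closed (complete) subspace of the Hilbert space $\dot{W}^{1,2}(\gO)^{3}$, that the balance constraint fixes only the component of $\ol{\vf}$ in that subspace while leaving the orthogonal component free, and that the Pythagorean identity then singles out the unique minimizer as the representative lying in $\image\dot{\nabla}$ itself. The only cosmetic difference is that you name the minimizer explicitly as $\dot{\nabla}(\ol{\fc})$, whereas the paper obtains it as the Riesz representative $\ol{\vf}_{0}$ of $\vf_{0}$ within $\image\dot{\nabla}$; these coincide since $\dot{\nabla}$ preserves inner products.
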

\begin{proof}
Except for the uniqueness, the statement is a summary of the foregoing
analysis. To prove the uniqueness, consider the diagram (\ref{eq:Representing_F})
and Proposition \ref{prop:optimality}. Since $\dot{\nabla}$ is metric
preserving and linear, $\image\dot{\nabla}$ is a Hilbert space with
the inner product inherited from $\dot{W}^{1,2}(\gO)$. Hence, there
is a unique $\ol{\vf}_{0}\in\image\dot{\nabla}\subset\dot{W}^{1,2}(\gO)$
such that
\begin{equation}
\inner{\ol w_{0}}{\gf}=\vf_{0}(\gf),\qquad\gf\in\image\dot{\nabla}.
\end{equation}
By the Riesz representation theorem 
\begin{equation}
\norm{\vf_{0}}=\sup_{\gf\in\image\dot{\nabla}}\frac{\vf_{0}(\gf)}{\norm{\gf}}=\inner{\ol{\vf}_{0}}{\ol{\vf}_{0}}^{1/2}.
\end{equation}
An obvious extension $\vf$ of $\vf_{0}$ would be in the form
\begin{equation}
\vf(\gf)=\inner{\ol{\vf}_{0}}{\gf},\qquad\gf\in\dot{W}^{1,2}(\gO).\label{eq:extension-Hilbert}
\end{equation}

Let $\ol{\vf}'\in\dot{W}^{1,2}(\gO)$ be another vector that induces
an extension of $\vf_{0}$. Then, for every $\gf\in\image\dot{\nabla}$,
\begin{equation}
\begin{split}\inner{\ol{\vf}'}{\gf} & =\inner{\ol{\vf}'-\ol{\vf}_{0}+\ol{\vf}_{0}}{\gf},\\
 & =\inner{\ol{\vf}'-\ol{\vf}_{0}}{\gf}+\inner{\ol{\vf}_{0}}{\gf},\\
 & =\inner{\ol{\vf}_{0}}{\gf}.
\end{split}
\end{equation}
Hence, $\ol{\vf}'-\ol{\vf}_{0}$ is orthogonal to $\image\dot{\nabla}$,
and 
\begin{equation}
\norm{\ol{\vf}'}^{2}=\norm{\ol{\vf}'-\ol{\vf}_{0}+\ol{\vf}_{0}}^{2}=\norm{\ol{\vf}'-\ol{\vf}_{0}}^{2}+\norm{\ol{\vf}_{0}}^{2}>\norm{\ol{\vf}_{0}}^{2}.
\end{equation}
It is concluded that the functional given by (\ref{eq:extension-Hilbert})
is the optimal extension.
\end{proof}
\begin{center}
\smallskip{}
\rule[0.5ex]{0.6\textwidth}{0.3pt}
\par\end{center}

\smallskip{}

\section{\label{sec:Capacity}Capacity of Regions}

The short analysis in this section may be motivated by considering
(the continuous counterpart of) the traffic regulation problem described
in the introduction. Assume that one has a control system so that
for every required flux density on the boundary\textemdash the traffic
leaving/entering the region\textemdash and a required rate of density
of vehicles, the flux field in the region is regulated so as to induce
an optimal flux field $\vf^{\opt}$. However, due to various constraints,
say, for example, the maximum permitted speed of the vehicles, it
is required that $\norm{\vf^{\opt}}\les M$, where $M$ is a given
bound. The question we are concerned with is what boundary flux and
density rates, can be accommodated. In other words, we are looking
for the capacity of the region and the control system to handle the
required traffic. We show that there is a minimal positive number,
$C$, depending only on the geometry of $\gO$, such that the system
can accommodate any required boundary flux and density rate if their
norms are not greater than $CM$. The capacity is simply given by
the relation $1/C=\norm{\gd}$.

In the various examples above, we have shown that optimal velocity
fields for pairs $\cons=(\wh{\vform},\wh{\sform})$, optimum flow
fields, $\vf^{\opt}$, exist and $\norm{\vf^{\opt}}=\norm{\gd^{*}(\cons)}$.
To simplify the notation and indicate the dependence on $\cons$,
we will write $\optm(\cons):=\norm{\vf^{\opt}}=\norm{\gd^{*}(\cons)}.$
(Note that we do not specify the space containing $c$ as it depends
on the particular case considered, as above.) Evidently, for any $\gl>0$,
we have
\begin{equation}
\go(\gl c)=\gl\go(c).\label{eq:homogeneity}
\end{equation}
The sensitivity of the region to the input pattern of $c$ can be
represented by the normalized optimum
\begin{equation}
K_{c}:=\frac{\go(c)}{\norm c}.\label{eq:opt_c}
\end{equation}

It is assumed now that some control system regulates the flux fields
so that for each pair $c$ as above, the regulated flux field is optimal.
Thus, the worst-case input pattern is represented by 
\begin{equation}
K=\sup_{c}K_{c}=\sup_{c}\frac{\go(c)}{\norm c}.
\end{equation}
Using (\ref{eq:opt_c}), it follows immediately that 
\begin{equation}
K=\sup_{c}\frac{\norm{\gd^{*}(\cons)}}{\norm c}=\norm{\gd^{*}}=\norm{\gd},
\end{equation}
by a standard result on dual linear operators.

It is further assumed now that the control system allows (or can handle)
only flux fields $\vf$ such that $\norm{\vf}\les M$, for a given
bound $M>0$.

Let $c'$ be any pair of input fields satisfying
\begin{equation}
\norm{c'}\les\frac{M}{K}.
\end{equation}
Then, 
\begin{equation}
\begin{split}M & \ges K\norm{c'},\\
 & =\norm{c'}\sup_{c}\left\{ \frac{\norm{\gd^{*}(\cons)}}{\norm c}\right\} ,\\
 & \ges\norm{c'}\frac{\norm{\gd^{*}(\cons')}}{\norm{c'}},\\
 & =\norm{\gd^{*}(c')}.
\end{split}
\end{equation}
It follows from (\ref{eq:opt_c}) that 
\begin{equation}
\go(c')\les M.
\end{equation}
This is summarized by

\begin{myprop}\label{prop:capacity}Let the \emph{capacity, $C$,
of the set $\gO$ }be defined as $C=1/K$. Then, the control system
can accommodate any pair of input fields, $c$, satisfying 
\begin{equation}
\norm c\les CM=\frac{M}{\norm{\gd}}.
\end{equation}

\end{myprop}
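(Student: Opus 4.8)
The plan is to assemble the definitions established in the discussion immediately preceding the statement; no new machinery is required. The single identity I lean on is $K=\norm{\gd^{*}}=\norm{\gd}$, which combines the standard fact that a bounded linear operator and its dual share the same operator norm with the optimization results of Sections~\ref{sec:Lq-opt} and~\ref{sec:Dissipation-Optimization-in-W12}, where the optimal flux field $\vf^{\opt}$ is shown to exist and to satisfy $\go(\cons)=\norm{\gd^{*}(\cons)}$.

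First I would simply unwind the definition $C=1/K$ to record that $CM=M/K=M/\norm{\gd}$, so that the two expressions appearing in the displayed bound agree. Then, for an arbitrary input pair $\cons$ with $\norm{\cons}\les CM$, I would bound the optimum directly by the operator norm:
\[
\go(\cons)=\norm{\gd^{*}(\cons)}\les\norm{\gd^{*}}\,\norm{\cons}=K\norm{\cons}\les K\cdot CM=K\cdot\frac{M}{K}=M.
\]
Since $\go(\cons)=\norm{\vf^{\opt}}$, this yields $\norm{\vf^{\opt}}\les M$; that is, the optimal flux field associated with $\cons$ satisfies the admissibility constraint, and hence the control system can accommodate $\cons$. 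This is the entire content of the claim.

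The step I would treat most carefully---though it is bookkeeping rather than a genuine obstacle---is ensuring that $K$ is finite, so that $C=1/K$ is well defined and positive. This is exactly the boundedness of $\gd=(\incl,\dot{\trace})$ (equivalently, of the trace map together with the inclusion), which was established earlier. I would also add, as a closing remark, that $C$ is \emph{minimal}: since $K=\norm{\gd}$ is precisely the supremum of $\go(\cons)/\norm{\cons}$ over admissible pairs, no threshold larger than $CM$ can be guaranteed uniformly in $\cons$, so the bound $\norm{\cons}\les CM$ cannot be relaxed. This minimality belongs to the surrounding narrative rather than to the formal statement, so I would keep it separate from the main estimate above.
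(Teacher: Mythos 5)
Your proposal is correct and follows essentially the same route as the paper: the paper's own argument (given in the text just before the proposition) bounds $\norm{\gd^{*}(c')}$ by $K\norm{c'}\les M$ using $K=\sup_{c}\norm{\gd^{*}(c)}/\norm{c}=\norm{\gd^{*}}=\norm{\gd}$, which is exactly your operator-norm estimate written as a comparison with the supremum. Your added remarks on the finiteness of $K$ and the minimality of $C$ are consistent with the paper's surrounding discussion and add nothing that conflicts with it.
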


\newcommand{\etalchar}[1]{$^{#1}$}

\end{document}